\documentclass[11pt,oneside,reqno]{amsart}
\usepackage{amsmath,amssymb,amsbsy,amsfonts,amsthm,
latexsym,amsopn,amstext, amsxtra,euscript,amscd,hyperref}
\usepackage{xcolor}
\topmargin -1.3cm
\textwidth 160mm
\textheight 250mm
\oddsidemargin 0.70cm
\evensidemargin 0.70cm

\newtheorem{theorem}{Theorem}[section]
\newtheorem{definition}{Definition}[section]
\newtheorem{prop}{Proposition}[section]
\newtheorem{corollary}{Corollary}[section]
\newtheorem{lemma}{Lemma}[section]

\topmargin -1.3cm
\textwidth 176mm
\textheight 235mm
\oddsidemargin -0.70cm
\evensidemargin -0.70cm

\def \N{\mathbb{N}}

\def \Z{\mathbb{Z}}

\begin{document}

\title[]{On the rational approximation to linear combinations of powers}
\author{Veekesh Kumar and Gorekh Prasad}

\pagenumbering{arabic}

\begin{abstract} 
For a complex number $x$, $\Vert x\Vert:=\min\{|x-m|:m\in\mathbb{Z}\}$. 
Let $k\geq 1$ be an integer, and $K$ be a number field. Let $\alpha_1,\ldots,\alpha_k$ be  algebraic numbers with $|\alpha_i|\geq 1$ and let $d_i$ denotes the degree of $\alpha_i$ for $1\leq i\leq k$.  Set $d=d_1+\cdots+d_k$. 
In this article, we show  that if  the inequality 
$
0<\Vert\lambda_1 q\alpha^n_1+\cdots+\lambda_k q\alpha^n_k\Vert<\frac{\theta^n}{q^{d+\varepsilon}}
$
has infinitely many solutions in $(n, q,\lambda_1,\ldots,\lambda_k)\in \mathbb{N}^2\times (K^\times)^k$ with absolute logarithmic Weil height of $\lambda_i$ is small compared to $n$ and some $\theta\in (0,1)$,  then, in particular,  the tuple $(\lambda_1 q\alpha^n_1,\ldots, \lambda_k q\alpha^n_k)$ is pseudo-Pisot, and at least one of $\alpha_i$ is an algebraic integer. This result can be viewed as Roth's type theorem for linear combinations of powers of algebraic numbers over $\overline{\mathbb{Q}}$. The case  $q=1$ was recently proved by Kulkarni, Mavraki, and Nguyen \cite{kul}, which is a generalization of Mahler's question proved in \cite{corv}. As a consequence of our result,  we obtain the following generalization of this question: let $\alpha>1$ be an algebraic number with $d=[\mathbb{Q}(\alpha):\mathbb{Q}]$. For a given $\varepsilon>0$, if  the inequality 
$$
0<\Vert\lambda q\alpha^n\Vert<\frac{\theta^n}{q^{d+\varepsilon}}
$$
has infinitely many solutions in the tuples $(n,q,\lambda)\in \mathbb{N}^2\times K^\times$ with absolute logarithmic Weil height of $\lambda$ is small compared to $n$ and $\theta\in (0,1)$, then some power of $\alpha$ is  a Pisot number. As an application of this result, we deduce the transcendence of certain infinite products of algebraic numbers.
\end{abstract}
\address[Veekesh Kumar]{Department of Mathematics, Indian Institute of Technology, Dharwad  580011, Karnataka, India.}

\email[]{veekeshk@iitdh.ac.in}
\address[Gorekh Prasad]{Harish-Chandra Research Institute, A CI of Homi Bhaba National Institute, Prayagraj, 211019, India.}

\email[]{gorekhprasad@hri.res.in}
\subjclass[2010] {Primary 11J68, 11J87;  Secondary 11B37, 11R06 }
\keywords{Rational approximation, Linear recurrence sequence, Pisot number,  Schmidt Subspace Theorem}

\maketitle

\section{\bf{Introduction}}

\medskip
For a complex number $x$,  $\Vert x\Vert$  denotes the distance of $x$ from its nearest integer in $\mathbb{Z}$. In other words, 
$$
\Vert x\Vert:=\mbox{min}\{|x-m|:m\in\mathbb{Z}\}.
$$
Mahler \cite{mahler} in 1957 showed that for   $\alpha\in \mathbb{Q}\backslash\Z$ with $\alpha>1$ and any real number $\varepsilon>0$, there are only finitely many $n\in \N$ satisfying  $\Vert\alpha^n\Vert<2^{-\varepsilon n}$. As a consequence of Mahler's result,  the number $g(k)$ in Waring's problem satisfies: 
$$
g(k)=2^k+\left[\left(\frac{3}{2}\right)^k\right]-2
$$
except for finitely many values of $k$. In the same paper, Mahler asked: Classify all the algebraic numbers as having the same property as the non-integral rationals.
\vspace{.2cm}

In 2004, by ingenious applications of the Subspace Theorem,  Corvaja and Zannier \cite{corv} proved the following  {\it Thue-Roth type inequality} with {\it moving targets} to answer Mahler's question.

\begin{theorem}\label{maintheorem}{\rm(Corvaja and Zannier)}~
Let $\alpha, \lambda\neq 0$ be  algebraic numbers  and $d=[\mathbb{Q}(\alpha):\mathbb{Q}]$. Let $\varepsilon$ be a positive real number. Suppose the set $\mathcal{A}$ of pairs $(n,q)$ satisfying the inequalities $|\lambda q\alpha^n|>1$ and $0<\Vert\lambda q\alpha^n \Vert< \frac{1}{H(\alpha^n)^\varepsilon q^{d+\varepsilon}}$ is infinite, where $H(\alpha)$ is the absolute Weil height of an algebraic number. Then $\lambda q\alpha^n$ is pseudo-Pisot for all but finitely many pairs $(n,q)\in\mathcal{A}$. 
\end{theorem}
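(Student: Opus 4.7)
The plan is to apply the Schmidt Subspace Theorem to $(d+1)$-dimensional projective points built from the Galois conjugates of $\lambda\alpha^n$, and then to read off the pseudo-Pisot conclusion from the hyperplane that survives. Let $K$ be a Galois closure of $\mathbb{Q}(\alpha,\lambda)$ over $\mathbb{Q}$, with $\sigma_1=\mathrm{id},\sigma_2,\ldots,\sigma_d$ the distinct embeddings of $\mathbb{Q}(\alpha)$ into $K$, and let $S\subset M_K$ be a finite set of places of $K$ containing all archimedean places together with all finite places at which $\alpha$ or $\lambda$ is not a unit. For each $(n,q)\in\mathcal{A}$, let $p=p_{n,q}\in\mathbb{Z}$ be the nearest integer to $\lambda q\alpha^n$, so that $|\lambda q\alpha^n-p|<H(\alpha^n)^{-\varepsilon}q^{-d-\varepsilon}$. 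Suppose, for contradiction, that infinitely many $(n,q)\in\mathcal{A}$ yield a $\lambda q\alpha^n$ that is not pseudo-Pisot.

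Next, form the projective point
\[
\mathbf{x}_{n,q}=\bigl(p,\,q\sigma_1(\alpha)^n,\,q\sigma_2(\alpha)^n,\ldots,q\sigma_d(\alpha)^n\bigr),
\]
which, after clearing by an $(n,q)$-independent denominator, is an $S$-integral point of $\mathbb{P}^d(K)$. At the archimedean place $v_1$ extending the identity embedding of $\mathbb{Q}(\alpha)$ I take the approximation form $\sigma_1(\lambda)Y_1-Y_0$ together with the coordinate forms $Y_1,Y_2,\ldots,Y_d$; at every other place $v\in S$ I take only coordinate forms $Y_0,Y_1,\ldots,Y_d$. The value of the approximation form at $\mathbf{x}_{n,q}$ is exactly $\lambda q\alpha^n-p$, which is tiny by the hypothesis. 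Combining this with the product formula applied to $q$ and to the rational $S$-unit $\prod_{i=1}^{d}\sigma_i(\alpha)=\pm N_{\mathbb{Q}(\alpha)/\mathbb{Q}}(\alpha)$, a place-by-place estimate gives
\[
\prod_{v\in S}\prod_{i=0}^{d}\frac{|L_{i,v}(\mathbf{x}_{n,q})|_v}{\|\mathbf{x}_{n,q}\|_v}\ \ll\ H(\mathbf{x}_{n,q})^{-\varepsilon'}
\]
for some fixed $\varepsilon'>0$; the exponent $d+\varepsilon$ of the hypothesis is consumed precisely by the $d$ coordinate forms $q\sigma_i(\alpha)^n$ at $v_1$ (each contributing a factor of $q$), while the $H(\alpha^n)^\varepsilon$ furnishes the remaining height saving against $H(\mathbf{x}_{n,q})$.

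The Subspace Theorem then forces all but finitely many $\mathbf{x}_{n,q}$ into one of finitely many proper hyperplanes; passing to an infinite subsequence, there is a nontrivial relation
\[
a_0 p+\sum_{i=1}^{d} a_i\,q\sigma_i(\alpha)^n=0
\]
holding on that subsequence. The coefficient $a_0$ must be nonzero, since otherwise one would obtain a nontrivial vanishing linear combination of the sequences $(\sigma_i(\alpha)^n)_n$, which is impossible by the standard non-vanishing result for non-degenerate power sums. Normalizing $a_0=-1$, comparing $p\approx\sigma_1(\lambda)q\sigma_1(\alpha)^n$ with the relation $p=\sum_i a_iq\sigma_i(\alpha)^n$, and using $|\lambda q\alpha^n|>1$ together with a growth-rate argument, one identifies $a_1=\sigma_1(\lambda)$; Galois equivariance applied to the finite family of hyperplanes produced by the Subspace Theorem then forces $a_i=\sigma_i(\lambda)$ for every $i$. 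Hence $p=\mathrm{Tr}_{\mathbb{Q}(\alpha)/\mathbb{Q}}(\lambda q\alpha^n)\in\mathbb{Z}$, providing the integer-trace half of the pseudo-Pisot condition; and the tail identity $p-\lambda q\alpha^n=\sum_{i\geq 2}\sigma_i(\lambda)q\sigma_i(\alpha)^n$, combined with a comparison of growth rates of the conjugate sequences, yields $|\sigma_i(\lambda q\alpha^n)|<1$ for every $i\geq 2$. This contradicts the supposition that infinitely many $(n,q)\in\mathcal{A}$ are not pseudo-Pisot.

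The main obstacle is the place-by-place verification that the double product of linear-form values meets the Subspace Theorem hypothesis with the precise exponent $d+\varepsilon$ of the statement: the balance is tight, since the $d$ coordinate forms at $v_1$ account exactly for the $q^d$ factor, leaving only the $\varepsilon$ slack (in both $q^\varepsilon$ and $H(\alpha^n)^\varepsilon$) to produce the required height saving. A secondary subtlety is the identification of the surviving hyperplane with the trace relation, for which ruling out spurious hyperplanes uses both the non-degeneracy of power sums and, crucially, the hypothesis $|\lambda q\alpha^n|>1$ to pin down the leading coefficient $a_1$.
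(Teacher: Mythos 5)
The theorem you are proving is stated in the paper as background from Corvaja--Zannier~\cite{corv}; the paper itself does not re-prove it, but the analogous machinery appears in the proof of Proposition~\ref{propnew}. Your overall strategy --- build the point $(p,\,q\sigma_1(\alpha)^n,\ldots,q\sigma_d(\alpha)^n)$, apply the Subspace Theorem with one approximation form plus coordinate forms, extract a linear relation, identify its coefficients by Galois equivariance, and read off the trace and pseudo-Pisot conclusions --- is indeed the Corvaja--Zannier strategy and is the correct one.

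There is, however, a genuine gap in the place-by-place verification, exactly where you flagged the main obstacle. You put the approximation form $\sigma_1(\lambda)Y_1-Y_0$ only at the single place $v_1$. With the normalization used in the paper, $|\cdot|_{v_1}$ carries the weight $d(v_1)/[K:\mathbb{Q}]$, so the approximation form at $v_1$ yields only
\[
|\lambda q\alpha^n-p|_{v_1}
   =|\lambda q\alpha^n-p|^{\,d(v_1)/[K:\mathbb{Q}]}
   <\bigl(H(\alpha^n)^{\varepsilon}q^{d+\varepsilon}\bigr)^{-d(v_1)/[K:\mathbb{Q}]},
\]
a saving that is diluted by the exponent $d(v_1)/[K:\mathbb{Q}]<1$ as soon as $[K:\mathbb{Q}]>1$. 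The claim that ``the exponent $d+\varepsilon$ is consumed precisely by the $d$ coordinate forms $q\sigma_i(\alpha)^n$ at $v_1$ (each contributing a factor of $q$)'' is therefore not right: at $v_1$ those $d$ forms contribute only $|q|_{v_1}^{d}=q^{\,d\cdot d(v_1)/[K:\mathbb{Q}]}$, not $q^d$. The full $q^d$ arises from $\prod_{v\in S}\prod_{i=1}^{d}|q\sigma_i(\alpha)^n|_v\le q^d$ summed over \emph{all} of $S$, and on top of that the $Y_0$-form (equal to $p$) at the places $v\ne v_1$ contributes roughly $|p|^{1-d(v_1)/[K:\mathbb{Q}]}\approx (q|\alpha|^n)^{1-d(v_1)/[K:\mathbb{Q}]}$. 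Putting these together, the $q$-exponent of the product of linear-form values comes out to about $(d+1)\bigl(1-d(v_1)/[K:\mathbb{Q}]\bigr)-\varepsilon\,d(v_1)/[K:\mathbb{Q}]$, which is strictly positive for $d\ge 2$ and small $\varepsilon$. The Subspace Theorem hypothesis is then not met, and the argument does not close.

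The fix --- which is what Corvaja--Zannier actually do, and what the paper does in the proof of Proposition~\ref{propnew} at equation~\eqref{eq3.4} --- is to use a \emph{conjugated} approximation form at \emph{every} archimedean place $v$: namely $\rho_v^{-1}(\lambda)\,Y_{j(v)}-Y_0$, where $\rho_v$ is the automorphism of $K$ attached to $v$ and $j(v)$ is the index with $\sigma_{j(v)}(\alpha)=\rho_v^{-1}(\alpha)$. Then
\[
\prod_{v\in M_K^\infty}|L_{0,v}(\mathbf{x})|_v=|\lambda q\alpha^n-p|
\]
at full strength (since $\sum_{v\in M_K^\infty}d(v)=[K:\mathbb{Q}]$), and the coordinate form $Y_0$ never appears at an archimedean place, so the $|p|$ surplus disappears and the balance closes with exactly $\varepsilon$ to spare. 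A secondary and smaller issue: you write $\sigma_i(\lambda)$ for embeddings $\sigma_i$ of $\mathbb{Q}(\alpha)$, which is only well-defined if $\lambda\in\mathbb{Q}(\alpha)$; when $\lambda\notin\mathbb{Q}(\alpha)$ one must first decompose $\lambda$ over a fixed $\mathbb{Q}$-basis (the technique encoded in Lemma~\ref{lem2} of the paper) so that the coefficients of the linear forms, and of the resulting subspace relation, are unambiguous.
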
 

As introduced in \cite{corv}, an algebraic number $\alpha$ is said to be a {\it pseudo-Pisot number} if $|\alpha|>1$ and all its other Galois conjugates over $\mathbb{Q}$ have an absolute value less than one, and $\alpha$ has an integral trace. pseudo-Pisot numbers which are algebraic integers are precisely the Pisot numbers.
\smallskip

As a consequence of Theorem \ref{maintheorem}, they settled the question of Mahler and proved the following result: let $\alpha > 1$ be an algebraic number. Suppose for some $\theta \in (0,1)$, the inequality $\Vert\alpha^n \Vert < \theta^n$ has infinitely many solutions for $n \in \mathbb{N}$. Then, there exists an integer $m \geq 1$ such that $\alpha^m$ is a Pisot number. This is equivalent to saying that if the inequalities in Theorem \ref{maintheorem} have infinitely many solutions in pairs $(n,1)\in\mathcal{A}$,  and $\lambda=1$, then the conclusion of Theorem \ref{maintheorem} is that some power of $\alpha$ is a Pisot number.
\smallskip

Therefore, it is natural to ask the following question:
\bigskip

\noindent{\bf Question 1.}
Let $\alpha>1$ be an algebraic number with $d=[\mathbb{Q}(\alpha):\mathbb{Q}]$. Let $K$ be a number field. Suppose for some $\varepsilon>0$  and $\theta\in (0,1)$ the inequality 
$$
0<\Vert\lambda q\alpha^n\Vert<\frac{\theta^n}{q^{d+\varepsilon}}
$$
has infinitely many solutions in the triples $(n,q,\lambda)\in \mathbb{N}^2\times K^\times$ with absolute logarithmic Weil height of $\lambda$ is small compared to $n$. Then what can be said about $\alpha$? Can we say that some power of $\alpha$ is a Pisot number?

\bigskip

 Before stating further  results, we recall the following definitions introduced in \cite{kul}.
\begin{definition}
Let $(\beta_1,\ldots,\beta_k)$ be  a tuple of distinct non-zero algebraic numbers. Set 
$$
B:=\{\beta\in\bar{\mathbb{Q}}^\times\backslash\{\beta_1,\ldots,\beta_k\}:~\beta=\sigma(\beta_i)~~\mbox{for some~~}\sigma:\mathbb{Q}(\beta_1,\ldots,\beta_k)\to \mathbb{C}~~\mbox{and~~} 1\leq i\leq k\}.
$$
 Then the tuple $(\beta_1,\ldots,\beta_k)$ is  called pseudo-Pisot if $\sum_{i=1}^k\beta_i+\sum_{\beta\in B}\beta\in\mathbb{Z}$  and $|\beta|<1$  for every $\beta\in B$. Moreover, if $\beta_i$ is an algebraic integer for $1\leq i\leq k$ then the tuple $(\beta_1,\ldots,\beta_k)$ is called Pisot.
\end{definition}
\begin{definition}
 A tuple $(\alpha_1, \ldots, \alpha_k)$ of non-zero algebraic numbers is {\it non-degenerate} if $\alpha_i/\alpha_j$ is not a root of unity  for  all integers $1\leq i < j \leq k$.
\end{definition}
Let $h(x)$ denote the absolute logarithmic Weil height, see Section 2 below.   
A  function $f:\mathbb{N}\rightarrow (0,\infty)$ satisfying $\displaystyle\lim_{n\to\infty}\frac{f(n)}{n}=0$ is called a sublinear  function.  Let $G_\mathbb{Q}$ be the absolute Galois group of $\mathbb{Q}.$  
\bigskip

With these  notations, we  state the recent result of Kulkarni, Mavraki and Nguyen proved in  \cite{kul}, which generalizes  Mahler's question. More precisely, the following theorem.
\begin{theorem}\label{maintheorem1}{\rm(Kulkarni, Mavraki and Nguyen)}
Let $k\in\mathbb{N}$ and let $(\alpha_1,\ldots,\alpha_k)$ be a  non-degenerate tuple of   algebraic numbers with $|\alpha_i|\geq 1$ for $1\leq i\leq k$.  Let $K$ be a number field and $f$ be a sublinear function.  Suppose that for some $\theta\in (0,1)$, the set $\mathcal{A}$ of  tuples $(n,\lambda_1,\ldots,\lambda_k)\in\mathbb{N}\times (K^\times)^k$ satisfying the inequality
\begin{equation*}\label{eq1.1}
\tag{1.1}
\Vert\lambda_1 \alpha^n_1+\cdots+\lambda_k \alpha^n_k\Vert<\theta^n\quad \mbox{and}~~~ \max_{1\leq i\leq k}h(\lambda_i)<f(n)
\end{equation*}
is  infinite. Then 
\begin{enumerate}
\item[(i)] Each  $\alpha_i$  is an   algebraic integer. 
\item[(ii)] For each $\sigma\in G_\mathbb{Q}$  and $1\leq i\leq k$ such that $\frac{\sigma(\alpha_i)}{\alpha_j}$ is not a root of unity for $1\leq j\leq k$, we have $|\sigma(\alpha_i)|<1$.
\end{enumerate}
Moreover, for all but finitely many  tuples $(n,\lambda_1,\ldots,\lambda_k)\in\mathcal{A}$, the following hold:
\begin{enumerate}
\item[(iii)]
$\sigma(\lambda_i \alpha^n_i)=\lambda_j\alpha^n_j$ precisely for  those triples $(\sigma, i, j)\in G_\mathbb{Q}\times\{1,\ldots,k\}^2$ such that $\frac{\sigma(\alpha_i)}{\alpha_j}$ is a root of unity.
\item[(iv)]
 The tuple $(\lambda_1  \alpha^n_1,\ldots,\lambda_k  \alpha^n_k)$ is pseudo-Pisot. 
\end{enumerate}
\end{theorem}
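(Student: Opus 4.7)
My plan is to follow the blueprint of the Corvaja--Zannier proof of \thmref{maintheorem}, extending it to a linear combination of $k$ powers with moving weights $\lambda_i$. For each tuple $(n,\lambda_1,\ldots,\lambda_k)\in\mathcal{A}$, let $p_n\in\mathbb{Z}$ be the integer closest to $S_n:=\sum_{i=1}^k\lambda_i\alpha_i^n$, so that $|S_n-p_n|<\theta^n$. Let $L$ be the Galois closure of $K(\alpha_1,\ldots,\alpha_k)/\mathbb{Q}$, let $\sigma_1=\mathrm{id},\sigma_2,\ldots,\sigma_r$ denote its embeddings into $\overline{\mathbb{Q}}$, and let $T$ be a finite set of places of $L$ consisting of all archimedean places together with every non-archimedean place at which some $\alpha_i$ fails to be integral. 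After extracting an infinite subsequence we may assume the signs and archimedean sizes of $\sigma_j(\alpha_i)^n$ behave coherently as $n\to\infty$.

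To the projective point
$$
P_n := \bigl(1:p_n:\sigma_j(\lambda_i\alpha_i^n)\bigr)_{\substack{1\le i\le k\\ 1\le j\le r}}\in\mathbb{P}^{N}(L)
$$
I would apply Schmidt's Subspace Theorem using the following system of linear forms. At the archimedean place attached to $\sigma_1=\mathrm{id}$ include the form $\ell_\infty(\mathbf{x})=x_p-\sum_i x_{1,i}$, whose value at $P_n$ equals $p_n-S_n$ and is therefore bounded by $\theta^n$; elsewhere (and for the remaining forms at $\sigma_1$) use coordinate forms. The sublinearity of $f$ guarantees $\log H(P_n)=\bigl(\log\max_i|\alpha_i|\bigr)n+o(n)$, while the one small form contributes a factor $\theta^n$, yielding a twisted double product that decays exponentially faster than any polynomial in $H(P_n)$. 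The Subspace Theorem therefore forces infinitely many $P_n$ into a common proper linear subspace of $\mathbb{P}^N(L)$.

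Each resulting $L$-linear dependence takes the shape
$$c_0+c_1 p_n+\sum_{i,j}c_{j,i}\,\sigma_j(\lambda_i)\sigma_j(\alpha_i)^n=0\quad\text{for infinitely many } n.$$
Replacing $p_n$ by $S_n+O(\theta^n)$ converts this into an almost-vanishing sum $\sum_{i,j}c'_{j,i}\sigma_j(\lambda_i)\sigma_j(\alpha_i)^n=o(1)$; the Evertse--van der Poorten--Schlickewei theorem on vanishing $S$-unit subsums, combined with the non-degeneracy of $(\alpha_1,\ldots,\alpha_k)$, then forces each minimal vanishing subsum to identify $\sigma_j(\lambda_i\alpha_i^n)$ with some $\lambda_{i'}\alpha_{i'}^n$ precisely when $\sigma_j(\alpha_i)/\alpha_{i'}$ is a root of unity, giving conclusion (iii). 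For (ii), any unmatched conjugate $\sigma_j(\alpha_i)$ must satisfy $|\sigma_j(\alpha_i)|<1$ --- otherwise its contribution via a suitable embedding would have size at least $1$ and destroy the approximation. For (i), I would rerun the Subspace argument at a non-archimedean place $v\in T$ above a prime in the denominator of some $\alpha_i$, since a $v$-adically large contribution of $S_n$ cannot be compensated by the rational integer $p_n$. Finally (iv) follows from (i)--(iii) by summing the complete Galois orbit.

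The most delicate step, to my mind, is the passage from the bare proper-subspace output of the Subspace Theorem to the rigid structural identity (iii): a priori the linear dependence obtained may intertwine many indices $(i,j)$, and one needs either a descent on $k$ (peeling off matched Galois pairs one at a time) or iterated applications of the $S$-unit subsum theorem to reduce the dependence to the claimed Galois-matching identities. The sublinear bound $h(\lambda_i)<f(n)$ is essential throughout: it is what prevents the projective heights $H(P_n)$ from swamping the exponential approximation $\theta^n$ that drives the Subspace Theorem.
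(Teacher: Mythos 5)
Your high-level plan is the right one (this is the Corvaja--Zannier blueprint that \cite{kul} and the present paper both follow), but the sketch as written has two gaps that would sink the subspace--theorem step, and a third that would sink the post-processing.

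First, you place the ``small'' form $\ell_\infty(\mathbf{x})=x_p-\sum_i x_{1,i}$ only at the archimedean place attached to $\sigma_1=\mathrm{id}$, and use pure coordinate forms at all other archimedean places. With the normalized absolute values $|\cdot|_v=|\sigma_v^{-1}(\cdot)|^{d(v)/[L:\mathbb{Q}]}$ this gives $|\ell_\infty(P_n)|_{\sigma_1}=|p_n-S_n|^{d(\sigma_1)/[L:\mathbb{Q}]}$, only a fractional power of $\theta^n$, and the coordinate forms at the other archimedean places contribute factors of size roughly $C^n$ with $C>1$ which are \emph{not} cancelled by the product formula, since $p_n$ and the $\lambda_i$ are not $S$-units. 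The correct move is the one the paper makes around \eqref{eq3.4}--\eqref{eq3.6}: at \emph{every} archimedean $v$ you include the $\sigma_v$-conjugated form $L_{v,1}(\mathbf{X})=\sum_{\mathbf{b}}\tilde{c}_{v,\mathbf{b}}X_{\mathbf{b}}-X_1$; none of these is individually small, but their product over $v\in M^\infty_L$ equals $|p_n-S_n|<\theta^n$ by the product formula, which is what drives the exponential gain. Second, and connected to this, you conjugate $\lambda_i$ directly. The paper first invokes Lemma~\ref{lem2} to write $\lambda_{i,j}=\sum_\ell b_{i,j,\ell}\omega_\ell$ with $b_{i,j,\ell}\in\mathbb{Q}$ of sublinear height. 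This is essential: the Galois action then only touches $\omega_\ell$ and $\alpha_{i,j}$, the rational $b$'s are Galois-invariant, and the Gaussian-elimination reduction to a spanning set $\mathfrak{L}^*$ of size $\tau\le d$ (which caps the number of linear forms and hence the exponent in the Subspace Theorem) becomes possible. Without this decomposition the dimension count and the height bookkeeping in the analogue of \eqref{eq3.7}--\eqref{eq3.9} do not close up.

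Third, after obtaining a fixed-coefficient linear relation from the Subspace Theorem, you appeal to the Evertse--van der Poorten--Schlickewei theorem on vanishing $S$-unit subsums. That theorem concerns $S$-unit equations with fixed coefficients; in your relation the $\sigma_j(\lambda_i)$ are moving with $n$. What is actually needed is the moving-coefficient version, namely \cite[Proposition~2.2]{kul} (Proposition~\ref{prop-kul1} here) together with \cite[Proposition~2.3]{kul} (the engine behind Proposition~\ref{prop-kul2}), where the sublinear bound $h(\lambda_i)<f(n)$ is exactly the hypothesis that makes the moving $S$-unit-type argument work. Finally, to make the root-of-unity bookkeeping in part (iii) go through cleanly, one also needs the preliminary reduction of replacing each $\alpha_i$ by $\alpha_i^r$ (Lemma~\ref{lem3}) so that the equivalence $\sim$ becomes plain equality; this step is absent from your sketch but is harmless to add. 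Once those pieces are in place, your outline for deducing (i), (ii) and (iv) from the structural identity is in line with what the paper does.
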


By setting $k=1$ and $\lambda_1=1$, properties $(i)$ and $(iv)$ demonstrate that $\alpha^m$ is a Pisot number, which answers Mahler's question. Also, by using full strength of this theorem,  we can partially address Question 1. Specifically, when $h(\lambda q)<f(n)$, we can establish that some power of $\alpha$ is a Pisot number.   The main purpose of this article is to strengthen Theorem \ref{maintheorem} and  extend Theorem \ref{maintheorem1} to a more general sequence of the form $\{q\lambda_1\alpha^n_1+\cdots+q\lambda_k\alpha^n_k: n\in\mathbb{N}\}$, where $\lambda_i$'s are defined as in Theorem \ref{maintheorem1}, and $q\in\mathbb{N}$. As a result of this extension, we provide an answer to Question 1 when absolute logarithmic Weil height of $\lambda$ is small compared to $n$.
\smallskip
%

Here,  we have our first  theorem.
\begin{theorem}\label{maintheorem2}
Let $k\in\mathbb{N}$ and let $(\alpha_1,\ldots,\alpha_k)$ be a  non-degenerate tuple of   algebraic numbers. Let $d_i=[\mathbb{Q}(\alpha_i):\mathbb{Q}]$ for $1\leq i\leq k$ and set $d=d_1+\cdots+d_k$.  Let $K$ be a number field and $f$ be a sublinear function. Let $\varepsilon$ be a positive real number. Suppose for some $\theta\in (0,1)$, the set $\mathcal{A}$ of  tuples $(n,q, \lambda_1,\ldots,\lambda_k)\in\mathbb{N}^2\times (K^\times)^k$ such that $|q\alpha^n_i|\geq 1$ for $i=1,\ldots,k$ and  satisfying the inequalities 
\begin{equation}\label{eq1.2}
\tag{1.2}
0<\Vert\lambda_1 q\alpha^n_1+\cdots+\lambda_k q\alpha^n_k\Vert<\frac{\theta^n}{q^{d+\varepsilon}}\quad \mbox{and}~~~ \max_{1\leq i\leq k}h(\lambda_i)<f(n)
\end{equation}
is  infinite. Then 
\begin{enumerate}
\item[(i)] At least one of the $\alpha_i$'s is an algebraic integer and if  $h(q\lambda_i)<f(n)$ for $i=1,\ldots,k$ and for infinitely many $(n,q,\lambda_1,\ldots,\lambda_k)\in\mathcal{A}$,   then  all the  $\alpha_i$'s  are  algebraic integers. 
\item[(ii)] For each $\sigma\in G_\mathbb{Q}$  and $1\leq i\leq k$ such that $\frac{\sigma(\alpha_i)}{\alpha_j}$ is not a root of unity for $1\leq j\leq k$, we have $|\sigma(\alpha_i)|<1$.
\end{enumerate}
Moreover, for all but finitely many  tuples $(n,q,\lambda_1,\ldots,\lambda_k)\in\mathcal{A}$, the following hold:
\begin{enumerate}
\item[(iii)]
$\sigma(\lambda_i \alpha^n_i)=\lambda_j\alpha^n_j$ precisely for  those triples $(\sigma, i, j)\in G_\mathbb{Q}\times\{1,\ldots,k\}^2$ such that $\frac{\sigma(\alpha_i)}{\alpha_j}$ is a root of unity.
\item[(iv)]
 The tuple $(\lambda_1 q \alpha^n_1,\ldots,\lambda_k q \alpha^n_k)$ is pseudo-Pisot and no subsum of $\sum_{i=1}^k\lambda_i q\alpha^n_i$  vanishes under the trace map.
\end{enumerate}
\end{theorem}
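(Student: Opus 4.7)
The plan is to adapt the Schmidt Subspace Theorem strategy of \cite{corv} and \cite{kul} with a system of linear forms enlarged to include, at the non-distinguished archimedean places, coordinate forms corresponding to each Galois conjugate of each $\alpha_i$. This produces an extra factor of $|q|^d$ on the geometric side which is cancelled exactly by the denominator $q^{d+\varepsilon}$ appearing in \eref{eq1.2}, leaving a uniform gain of $q^{-\varepsilon}$ to feed the Subspace Theorem. First I would enlarge $K$ to a Galois number field $L$ containing all conjugates of $\alpha_1,\ldots,\alpha_k$, fix a finite set $S$ of places of $L$ containing every archimedean place and every place where some $\alpha_i$ fails to be a unit, and (after a standard reduction) arrange that the $\lambda_i$ appearing in the infinite family $\mathcal{A}$ are $S$-units, at the cost of at most a further sublinear adjustment to $f$.

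The working vector is $\mathbf{x}_n:=(p_n,\lambda_1 q\alpha_1^n,\ldots,\lambda_k q\alpha_k^n)\in L^{k+1}$, where $p_n\in\mathbb{Z}$ is the nearest integer to $\sum_i\lambda_iq\alpha_i^n$. At a distinguished archimedean place $v_0$ of $L$ I would take the approximation form $L^{(v_0)}=x_0-(x_1+\cdots+x_k)$, whose value at $\mathbf{x}_n$ is bounded by $\theta^n/q^{d+\varepsilon}$; at every other $v\in S$ I would take the $k+1$ coordinate forms $x_0,\ldots,x_k$, evaluated through the corresponding Galois embedding. Collecting contributions, each conjugate embedding of each $\alpha_i$ supplies one factor of $|q|$, so the $d=\sum d_i$ non-distinguished archimedean places collectively yield $q^d$; combined with the height bound $H(\mathbf{x}_n)\leq qC^n e^{O(f(n))}$ and the product formula applied to $q$, a short computation shows that the normalized double product of linear forms is bounded by $H(\mathbf{x}_n)^{-\delta}$ for some $\delta=\delta(\varepsilon,\theta)>0$ once $\theta$ is sufficiently small relative to the conjugate-growth constant. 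Schmidt's Subspace Theorem then traps all but finitely many $\mathbf{x}_n$ in a proper linear subspace of $L^{k+1}$.

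Passing to an infinite subfamily inside one fixed such subspace produces a non-trivial linear identity $c_0p_n=\sum_i c_i\lambda_iq\alpha_i^n$ valid for infinitely many $n$. Combined with the non-degeneracy of $(\alpha_1,\ldots,\alpha_k)$ and the Evertse--van der Poorten--Schlickewei theorem on vanishing subsums of $S$-unit equations, one deduces that each Galois conjugate $\sigma(\lambda_i\alpha_i^n)$ either coincides with some $\lambda_j\alpha_j^n$ (yielding the Galois matching statement (iii)) or is strictly smaller than $1$ in absolute value (yielding (ii)); summing over the Galois orbit of the identified sum then yields the integer-trace condition, which is exactly the pseudo-Pisot property (iv). The trace-nonvanishing refinement of (iv) follows by induction on $k$: if some subsum had vanishing trace, its complement would still satisfy the hypotheses of the theorem with a smaller number of terms, and the induction hypothesis would contradict the minimality of the Galois orbits already extracted.

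The algebraic-integer statement (i) is then read off at the non-archimedean places: if \emph{every} $\alpha_i$ had a non-trivial denominator, there would exist a prime $\mathfrak{p}$ of $L$, coprime to $q$ for infinitely many members of $\mathcal{A}$, at which the sum $\sum_i\lambda_iq\alpha_i^n$ would have $\mathfrak{p}$-adic absolute value exceeding $1$ geometrically in $n$, contradicting the fact that its distance to the integer $p_n$ at $v_0$ tends to zero. Under the stronger hypothesis $h(q\lambda_i)<f(n)$, one may set $\mu_i:=q\lambda_i$ and appeal to \thmref{maintheorem1} directly, upgrading (i) to the claim that every $\alpha_i$ is an algebraic integer. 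The principal technical obstacle is the careful bookkeeping: one must verify that exactly $d$ auxiliary linear forms are available to absorb the $q^d$ denominator, and that the sublinear term $f(n)$ fits inside the $\varepsilon$-slack uniformly over the infinite set $\mathcal{A}$.
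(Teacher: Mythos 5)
The overall engine you invoke---Schmidt's Subspace Theorem with a distinguished-place approximation form, coordinate forms elsewhere, and a cancellation of $q^d$ against the $q^{d+\varepsilon}$ in \eref{eq1.2}---is the right philosophy, but the vector space you choose makes the advertised $q^d$ cancellation impossible. With $\mathbf{x}_n=(p_n,q\lambda_1\alpha_1^n,\ldots,q\lambda_k\alpha_k^n)\in L^{k+1}$, only the $k$ coordinates $x_1,\ldots,x_k$ carry a factor of $q$, and with the normalized absolute values one has $\prod_{v\in S}|q|_v\leq q$ for each of them (the archimedean places already sum to a single factor of $q$); taking the product over $i=1,\ldots,k$ yields at most $q^k$, not $q^d$. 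Whenever some $d_i>1$ one has $d=\sum d_i>k$, and the denominator $q^{d+\varepsilon}$ is then strictly larger than what the Subspace estimate can profitably absorb, so the exponent in the final bound never beats $H(\mathbf{x})^{-(k+1+\varepsilon')}$. Your claim that ``each conjugate embedding of each $\alpha_i$ supplies one factor of $|q|$'' double-counts: the $[L:\mathbb{Q}]$ archimedean places of $L$ are already normalized so that their collective contribution is exactly one power of $q$ per coordinate, not one per embedding. The paper's Proposition~\ref{propnew} avoids this by working in a much larger ambient space: each $\lambda_{i,j}$ is decomposed into rational coordinates $b_{i,j,\ell}$ in a fixed $\mathbb{Q}$-basis (Lemma~\ref{lem2}), the components of the working vector are indexed over all Galois conjugates $\alpha_{i,j_2}$ of each $\alpha_i$, and a Gaussian-elimination step reduces the dimension to $\tau\leq d$. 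It is precisely these $\tau$ non-integral coordinates, each multiplied by $q$, that produce $q^\tau\leq q^d$. The exponent $d$ in \eref{eq1.2} is thus tied to a dimension count over all conjugates, not to $k$.

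Two further points. First, your ``standard reduction'' that the $\lambda_i$ may be assumed $S$-units at the cost of a sublinear adjustment to $f$ is not available: the $\lambda_i$ vary with $n$ over the infinite family $\mathcal{A}$ and their prime support is unbounded. The paper never assumes $\lambda_i\in\mathcal{O}_S^\times$; the rational-coordinate decomposition builds the heights $H(b_{i,j,\ell})\leq e^{O(f(n))}$ into the estimate directly, and the sublinearity of $f$ is what lets the factor $B^{\tau+N(\tau+1)}$ stay subexponential. Second, your argument for (i) compares an archimedean smallness of $p_n-\sum_i q\lambda_i\alpha_i^n$ to a $\mathfrak{p}$-adic bound, which by itself gives no contradiction (these are absolute values at different places of two different quantities). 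The paper instead substitutes the linear expansion $p=\sum_{i,j}q\eta_{i,j}\alpha_{i,j}^n$ into \eref{eq1.2} and runs a second Subspace-type argument on the conjugate terms $\eta_{i,j}\alpha_{i,j}^n$ with $j>m_i$, at a finite place $\omega$ where some $|\alpha_{i_0,j_0}|_\omega>1$. Your reduction of the strengthened part of (i) to \thmref{maintheorem1} via $\mu_i:=q\lambda_i$ is correct and matches the paper's own remark following \thmref{maintheorem2}.
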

In the case where $h(q\lambda_i) < f(n)$, Theorem \ref{maintheorem2} coincides with Theorem \ref{maintheorem1}. To see this, we set $\lambda_1' = q\lambda_1$, $\ldots,$ $\lambda'_k = q\lambda_k$. Then, we have $0 < \Vert\lambda'_1\alpha^n_1 + \cdots + \lambda'_k\alpha^n_k\Vert < \theta^n$ with $h(\lambda'_i) < f(n)$ and for some $\theta \in (0,1)$. This is equivalent to the inequality \eqref{eq1.1} in Theorem \ref{maintheorem1}. Furthermore, it also explains that, without loss of generality, one can take $q = 1$ when $h(q\lambda_i) < f(n)$.
\bigskip

As a consequence of Theorem \ref{maintheorem2}, when absolute logarithm Weil height of $\lambda$ is small compared to $n$, we settle Question 1 in the following corollary.
\begin{corollary}\label{cor}
Let $\alpha>1$ be an algebraic number and $d=[\mathbb{Q}(\alpha):\mathbb{Q}]$. Let $K$ be a number field and $f$ be a sublinear function. Suppose for some $\varepsilon>0$ and $\theta\in (0,1)$, the  inequality  
$$
0<\Vert\lambda q \alpha^n\Vert<\frac{\theta^n}{q^{d+\varepsilon}}
$$
holds for infinitely many triples $(n,q, \lambda)\in\mathbb{N}^2\times K^\times$ with $h(\lambda)<f(n)$. Then 
there exists a  positive integer $m$ such that   $\alpha^m$ is a Pisot number. In particular, $\alpha$ is an algebraic integer.
\end{corollary}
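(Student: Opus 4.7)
The plan is to apply Theorem \ref{maintheorem2} with $k=1$, taking $\alpha_1=\alpha$ and $\lambda_1=\lambda$, and then to translate its conclusions about Galois conjugates of $\alpha$ into the Pisot property of a suitable power $\alpha^m$. First I would check the hypotheses of Theorem \ref{maintheorem2}: non-degeneracy is vacuous for $k=1$; since $\alpha>1$ and $q\in\mathbb{N}$ we automatically have $|q\alpha^n|\geq 1$; the height bound $h(\lambda)<f(n)$ is part of our assumption; and the displayed inequality is exactly \eqref{eq1.2} with $d=d_1$. Thus the full strength of Theorem \ref{maintheorem2} is available.

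From conclusion (i), since $k=1$, the phrase ``at least one of the $\alpha_i$'s'' collapses to the assertion that $\alpha$ itself is an algebraic integer. From conclusion (ii), for every $\sigma\in G_\mathbb{Q}$ such that $\sigma(\alpha)/\alpha$ is not a root of unity, we have $|\sigma(\alpha)|<1$. To convert this into a statement about a power, I would let $R$ denote the finite set of ratios $\sigma(\alpha)/\alpha$ that are roots of unity (as $\sigma$ ranges over the finitely many embeddings $\mathbb{Q}(\alpha)\hookrightarrow\mathbb{C}$), and choose $m$ to be a common multiple of their orders. For each embedding $\sigma$, either $\sigma(\alpha)/\alpha$ is a root of unity whose order divides $m$, in which case $\sigma(\alpha^m)=\sigma(\alpha)^m=\alpha^m$, or it is not a root of unity, in which case $|\sigma(\alpha^m)|=|\sigma(\alpha)|^m<1$.

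Consequently, every Galois conjugate of $\alpha^m$ over $\mathbb{Q}$ other than $\alpha^m$ itself has absolute value strictly less than $1$. Combined with the facts that $\alpha^m$ is an algebraic integer (a power of an algebraic integer) and that $\alpha^m>1$ (since $\alpha>1$), this is exactly the definition of a Pisot number, which proves the corollary and in particular shows $\alpha$ is itself an algebraic integer. Since the Diophantine content is entirely handled by Theorem \ref{maintheorem2}, there is no serious obstacle left; the only step that requires a moment of care is the bookkeeping with Galois embeddings when passing from $\alpha$ to $\alpha^m$, i.e.\ choosing $m$ so that all root-of-unity ratios are trivialized simultaneously.
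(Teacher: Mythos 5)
Your proof is correct, and it takes a cleaner route than the paper's. The paper first replaces $\alpha$ by $\alpha^r$ (where $r$ is the order of the torsion subgroup of the Galois closure) and $\lambda$ by $\lambda\alpha^a$ for the residue class $n\equiv a\pmod r$ that occurs infinitely often, then applies Theorem~\ref{maintheorem2} to the tuple $(\alpha^r)$; having arranged in advance that $\sigma(\alpha^r)/\alpha^r$ being a root of unity forces $\sigma(\alpha^r)=\alpha^r$, it deduces $|\sigma(\alpha^r)|\le 1$ using the pseudo-Pisot conclusion~(iv) and then rules out $|\sigma(\alpha^r)|=1$ via conclusion~(ii). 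You instead apply Theorem~\ref{maintheorem2} directly to $(\alpha)$, read off from (i) that $\alpha$ is an algebraic integer and from (ii) that $|\sigma(\alpha)|<1$ whenever $\sigma(\alpha)/\alpha$ is not a root of unity, and only afterwards choose $m$ as a common multiple of the orders of the finitely many root-of-unity ratios $\sigma(\alpha)/\alpha$. This trivializes all such ratios in one stroke, so each conjugate of $\alpha^m$ other than $\alpha^m$ comes from a $\sigma$ with $|\sigma(\alpha)|<1$, giving $|\sigma(\alpha^m)|<1$. Your argument thus needs only conclusions (i) and (ii), avoids the arithmetic-progression reduction and the detour through pseudo-Pisotness of $\lambda q\alpha^n$, and postpones the choice of the exponent until the end rather than fixing $r$ at the start. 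Both proofs establish the same statement; yours is the more economical of the two.
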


It is important to note that the assumption $\Vert\lambda q\alpha^n\Vert\neq 0$ in the above corollary is impossible to avoid. For example, consider $\alpha=\frac{3}{2}$, $q=2^n$, and $\lambda\in\mathbb{Z}$. In this case, we have $\Vert\lambda q\alpha^n\Vert=0$, but $\alpha$ is not an integer.
\smallskip

The article is organized as follows: Section \ref{sec3} gives an overview of Weil heights and the technical ingredients needed to prove our results, including the famous   Schmidt's Subspace Theorem and some of its applications. In Section \ref{sec4}, we present the proofs of our results. In Section \ref{sec5}, we provide some applications of our result, where we prove some transcendence results. The approach we take up in this paper is an adaptation of the papers \cite{corv}, \cite{kul}, \cite{kumar} and \cite{kumar1}, with suitable modifications.

\section{\bf{Preliminaries}}\label{sec3} 

For any number field $K$, let $M_K$ be the set of all places on $K$ and $M_K^\infty$ be the set of all archimedean places on $K$. 
 For each place $w\in M_K$, let $K_w$ denote the completion of the number field $K$ with respect to $w$ and $d(w)=[K_w:\mathbb{Q}_\mathit{v}]$, where $\mathit{v}$ is the restriction of $w$ to $\mathbb{Q}$. 
For  every $w\in M_K$ whose restriction on $\mathbb{Q}$ is $v$ and $\alpha\in K$, we define the  normalized  absolute value $|\cdot|_w$ as follows:
\begin{equation*}
|\alpha|_w:=|\mbox{Norm}_{K_w/\mathbb{Q}_v}(\alpha)|^{\frac{1}{[K:\mathbb{Q}]}}_v.
\end{equation*} 
With these normalization,  the product formula   $\displaystyle\prod_{\omega\in M_K}|x|_\omega=1$ holds true for any  $x\in K^\times$. 
\smallskip

For all $x\in K$,  the 
absolute Weil  height  $H(x)$ is defined as 
$$
H(x):=\prod_{w\in M_K}\mbox{max}\{1,|x|_w\}, 
$$
and the absolute logarithmic height $h(x):=\log H(\alpha)$.
\bigskip

For a vector $\mathbf{x}=(x_1,\ldots,x_n)\in K^n$  and for a place $w\in M_K$, the $w$-norm for  $\mathbf{x}$ denoted by $||\mathbf{x}||_w$ is given by 
$$
||\mathbf{x}||_w:=\mbox{max}\{|x_1|_w,\ldots,|x_n|_w\}
$$
and  the projective height,  $H(\mathbf{x})$, is defined by 
$$
H(\mathbf{x}):=\prod_{w\in M_K}||\mathbf{x}||_w.
$$

The main  tool in our proofs  is the following version of Schmidt’s Subspace Theorem, which was formulated by Schlickewei and Evertse.  For a reference,  see (\cite[Chapter 7]{bomb}, \cite{evertse} and \cite[Page 16, Theorem II.2]{schmidt}).

\smallskip

\begin{theorem}\label{schli}{\rm(Subspace Theorem)}~
 Let $K$ be an algebraic number field and $m \geq 2$ an integer. Let $S$ be a finite set of places on $K$ containing all archimedean  places.  For each $v \in S$, let $L_{1,v}, \ldots, L_{m,v}$ be linearly independent linear  forms in the variables $X_1,\ldots,X_m$ with  coefficients in $K$.  For any $\varepsilon>0$, the set of solutions $\mathbf{x} \in K^m$ to the inequality 
\begin{equation*}
\prod_{v\in S}\prod_{i=1}^{m} \frac{|L_{i,v}(\mathbf{x})|_v}{|\mathbf{x}|_v} \leq \frac{1}{H(\mathbf{x})^{m+\varepsilon}}
\end{equation*}
lies in finitely many proper subspaces of $K^m$.
\end{theorem}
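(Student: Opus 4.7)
The plan is to follow the Schmidt/Schlickewei/Evertse blueprint: assume for contradiction that the solution set is Zariski-dense in $K^m$, partition the solutions by the ``shape'' of the tuple $(|L_{i,v}(\mathbf{x})|_v/||\mathbf{x}||_v)_{i,v}$, and then derive a contradiction via an auxiliary multihomogeneous polynomial together with a sharp index estimate. To set up, I would first rescale the forms so that $\prod_{v\in S}|\det(L_{1,v},\ldots,L_{m,v})|_v=1$; the product-formula change is absorbed into $\varepsilon$. Writing, for each solution $\mathbf{x}$ and each $(i,v)$,
\begin{equation*}
|L_{i,v}(\mathbf{x})|_v = ||\mathbf{x}||_v\cdot H(\mathbf{x})^{c_{i,v}(\mathbf{x})},
\end{equation*}
the hypothesis reads $\sum_{i,v} c_{i,v}(\mathbf{x})\le -(m+\varepsilon)$. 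Since the $c_{i,v}$ are bounded above (each form is $O(1)$ on the support) and the sum is bounded below by $-m$ up to normalization, a pigeonhole/partition argument produces an infinite subsequence on which the exponents are essentially constant weights $c_{i,v}$ with $\sum c_{i,v}\le -(m+\varepsilon/2)$.

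Next I would bring in the geometry of numbers. For fixed weights the solutions populate an $S$-adelic parallelepiped, and the adelic Minkowski theorem (Bombieri-Vaaler, or Roy-Thunder) produces successive minima $\lambda_1\le\cdots\le\lambda_m$ whose product is $\asymp H(\mathbf{x})^{\sum c_{i,v}}$. Because this sum is strictly negative, $\lambda_1$ is forced very small, so after a further subsequence all solutions cluster near a fixed complete flag $V_1\subset V_2\subset\cdots\subset V_{m-1}$ of $K$-subspaces of $K^m$. This converts the analytic smallness of the values $L_{i,v}(\mathbf{x})$ into algebraic proximity to subspaces, which is what the polynomial machine requires.

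The heart of the proof is a Thue-Siegel-Roth-Schmidt polynomial construction. Fixing a large integer $M$, I would select solutions $\mathbf{x}^{(1)},\ldots,\mathbf{x}^{(M)}$ with heights growing geometrically $H(\mathbf{x}^{(\ell+1)})\ge H(\mathbf{x}^{(\ell)})^\omega$ for a large parameter $\omega$, then apply Siegel's lemma over $K$ to build a nonzero multihomogeneous polynomial $P(\mathbf{Y}^{(1)},\ldots,\mathbf{Y}^{(M)})$ of multidegree $(r_1,\ldots,r_M)$ with $r_\ell\log H(\mathbf{x}^{(\ell)})$ roughly constant, integral coefficients of controlled height, and vanishing to prescribed large ``generalized index'' along products of the flag subspaces detected in Step 2. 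Taylor-expanding $P$ at $(\mathbf{x}^{(1)},\ldots,\mathbf{x}^{(M)})$ and bounding each monomial via the $v$-adic smallness coming from the Diophantine hypothesis, the product formula then forces $P$ and its low-order derivatives to vanish at this product point to an index far greater than is allowed by Roth's lemma (Schmidt's original route) or by Faltings' product theorem (Evertse's streamlined route). That contradiction kills the Zariski-density assumption, so the solutions lie in finitely many proper subspaces, as asserted.

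The main obstacle is the simultaneous calibration of $M$, $\omega$, and the multidegrees $(r_1,\ldots,r_M)$: the polynomial $P$ must enjoy enough slack to vanish to large index while keeping its height small enough that the Taylor lower bound on the index beats the Roth-lemma upper bound once the $S$-adic product formula closes up. Balancing these constraints simultaneously across all places of $S$ is exactly what forced the leap from Roth's 1955 theorem to Schmidt's 1972 proof, and then to the Schlickewei-Evertse $S$-adic generalization; the adelic form of Roth's lemma, or equivalently the product theorem, is the most delicate ingredient and would be the principal technical investment in writing out a complete proof.
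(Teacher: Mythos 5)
The paper does not prove this statement at all: Theorem~\ref{schli} is the Schlickewei--Evertse formulation of Schmidt's Subspace Theorem, quoted verbatim as a black box with references to Bombieri--Gubler, Evertse--Schlickewei and Schmidt's lecture notes, so there is no in-paper argument to compare yours against. What you have written is a roadmap of the standard proof from the literature, and the architecture you describe (normalizing the determinants, sorting solutions into approximation classes by the exponents $c_{i,v}$, adelic geometry of numbers, a Siegel-lemma construction of a multihomogeneous auxiliary polynomial, and a contradiction with Roth's lemma or the product theorem) is the correct one. As a proof, however, it is not acceptable: every step you defer is itself a theorem of substance, and at least two of your transitions are stated more smoothly than they actually go.

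Concretely: (1) the pigeonhole step assumes the $c_{i,v}(\mathbf{x})$ are ``bounded above \emph{and} below up to normalization,'' but a linear form can be $v$-adically as small as one likes on a solution, so one must first remove solutions on the hyperplanes $L_{i,v}=0$ and truncate the exponents before the partition into finitely many classes is legitimate; (2) the claim that smallness of $\lambda_1$ forces all solutions to ``cluster near a fixed complete flag'' is not a direct consequence of Minkowski's theorem --- this is precisely where Schmidt's theorem on the next-to-last minimum, Davenport's lemma on reduced bases, and an induction on the dimension enter, and it is the genuinely hard part of the proof rather than a preparatory reduction; (3) the calibration of $M$, $\omega$ and the multidegrees against the index bound from Roth's lemma (or Faltings' product theorem in Evertse's treatment) occupies most of Chapter~7 of Bombieri--Gubler and cannot be compressed into a sentence. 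None of this is a criticism of the paper, whose citation of the theorem is the standard and appropriate practice; but your proposal should be read as an annotated bibliography of the known proof, not as a proof.
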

We need the following  proposition,  established in \cite[Proposition 2.2]{kul} for the proofs of our results.

\begin{prop}\label{prop-kul1}
Let $(\alpha_1,\ldots,\alpha_k)$  be a non-degenerate tuple of non-zero algebraic numbers, let $f$ be a sublinear function, and let $K$ be a number field.  Then there are only finitely many tuples $(n, b_1,\ldots,b_k)\in\mathbb{N}\times(K^\times)^k$ satisfying 
$$
b_1\alpha^n_1+\cdots+b_k\alpha^n_k=0\quad\mbox{and}~~~ \max_{1\leq i\leq k}h(b_i)<f(n).
$$
\end{prop}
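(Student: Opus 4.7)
The plan is to induct on $k$. For $k=1$, the equation $b_1\alpha_1^n=0$ with $b_1,\alpha_1\ne 0$ is impossible. For $k=2$, the identity $b_1\alpha_1^n+b_2\alpha_2^n=0$ rearranges to $-b_1/b_2=(\alpha_2/\alpha_1)^n$; the left side has height at most $2f(n)$, sublinear in $n$, while the right side has height $n\cdot h(\alpha_2/\alpha_1)$, which is linear in $n$ by non-degeneracy. So only finitely many $n$ qualify.

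For the inductive step with $k\ge 3$, assume the result for all smaller arities and suppose, toward a contradiction, that infinitely many tuples $(n,b_1,\ldots,b_k)$ satisfy both the vanishing identity and the sublinear height bound. Split these into the \emph{non-degenerate} tuples (no proper subsum of $\sum b_i\alpha_i^n$ vanishes) and the rest. For the degenerate tuples, a pigeonhole over the finitely many proper subsets $I\subsetneq\{1,\ldots,k\}$ fixes one $I$ for which $\sum_{i\in I}b_i\alpha_i^n=0$ infinitely often; the sub-tuple $(\alpha_i)_{i\in I}$ remains non-degenerate with sublinear-height coefficients, and the inductive hypothesis eliminates this possibility. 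Hence we may assume that infinitely many of our tuples are non-degenerate.

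The main step is to apply the Subspace Theorem after using the equation to reduce the ambient dimension. Enlarging $K$ to contain all $\alpha_i$, take $S$ to be a finite set of places containing the archimedean ones and every place at which some $\alpha_i$ is not a unit; project to $\mathbf{y}_n=(b_1\alpha_1^n,\ldots,b_{k-1}\alpha_{k-1}^n)\in K^{k-1}$, using $b_k\alpha_k^n=-\sum_{i<k}b_i\alpha_i^n$ to eliminate the last coordinate. By non-degeneracy and the sublinear bound on $h(b_i)$, the projective height $H(\mathbf{y}_n)$ grows linearly in $n$. After passing to a subsequence on which, for each $v\in S$, the $v$-dominant index $j(v)\in\{1,\ldots,k-1\}$ of $\mathbf{y}_n$ is constant in $n$, I would take at each $v\in S$ the forms $L_{i,v}(Y)=Y_i$ for $i\ne j(v)$ and $L_{j(v),v}(Y)=Y_1+\cdots+Y_{k-1}$. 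These are linearly independent, and $L_{j(v),v}(\mathbf{y}_n)=-b_k\alpha_k^n$, so the vanishing of $\sum_{i=1}^k b_i\alpha_i^n$ transfers into smallness of this form. Standard place-by-place estimates, together with the product formula and the linear height growth of $\mathbf{y}_n$, then show that the product $\prod_{v\in S}\prod_i |L_{i,v}(\mathbf{y}_n)|_v/\|\mathbf{y}_n\|_v$ is bounded above by $H(\mathbf{y}_n)^{-(k-1)-\varepsilon}$ for some $\varepsilon>0$, so the Subspace Theorem places infinitely many $\mathbf{y}_n$ into a single proper subspace of $K^{k-1}$. This yields a fixed nontrivial relation $\sum_{i<k}c_ib_i\alpha_i^n=0$.

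After discarding the indices with $c_i=0$, this is a vanishing identity in strictly fewer than $k$ terms, with coefficients $c_ib_i$ still of sublinear height and the surviving $\alpha_i$ still forming a non-degenerate tuple. The inductive hypothesis applied to this shorter relation gives the required contradiction. The main obstacle I expect to meet is the place-by-place bookkeeping in the Subspace Theorem step: the forms must be chosen so that the cancellation $b_k\alpha_k^n=-\sum_{i<k}b_i\alpha_i^n$ propagates through the product over $v\in S$ to beat the height of $\mathbf{y}_n$ by a positive margin, while keeping the forms linearly independent and ensuring that the subspace extracted is not merely a tautological restatement of the known identity—this last concern is precisely what the projection to $K^{k-1}$ before invoking the theorem is designed to handle.
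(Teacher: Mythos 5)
The paper does not give a proof of this proposition: it is quoted directly from Kulkarni--Mavraki--Nguyen (\cite[Proposition 2.2]{kul}), so there is no internal argument to compare your proposal against. Judged on its own merits, your argument is sound and follows the standard route for statements of this kind: induction on $k$, a pigeonhole reduction of the degenerate tuples to the inductive hypothesis, and then the Subspace Theorem applied to the projection $\mathbf{y}_n=(b_1\alpha_1^n,\ldots,b_{k-1}\alpha_{k-1}^n)$ with the dominant form at each $v$ replaced by $Y_1+\cdots+Y_{k-1}=-b_k\alpha_k^n$. The bookkeeping you flagged does close: by the product formula and the $S$-unit property of the $\alpha_i$, the double product collapses to something of size $e^{o(n)}\bigl(\prod_{v\in S}\|\mathbf{y}_n\|_v\bigr)^{-k}$, while $\prod_{v\in S}\|\mathbf{y}_n\|_v\ge H(\mathbf{y}_n)e^{-o(n)}$ and $H(\mathbf{y}_n)\ge H(\alpha_1/\alpha_2)^n e^{-o(n)}$ grows exponentially by non-degeneracy together with Kronecker's theorem, so the target exponent $-(k-1)-\varepsilon$ is beaten for $n$ large. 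The extracted relation $\sum_{i<k}c_ib_i\alpha_i^n=0$ must have at least two nonvanishing $c_i$ (a single surviving term would force $b_i\alpha_i^n=0$), so it is a genuine shorter relation with coefficients of height $h(c_ib_i)=O(1)+o(n)=o(n)$, and the inductive hypothesis applies.

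One point you should make explicit: both in the $k=2$ base case and in the degenerate branch you conclude ``only finitely many $n$,'' but the proposition asserts finiteness of \emph{tuples} $(n,b_1,\ldots,b_k)$. This requires one more step: for each admissible $n$, the constraint $h(b_i)<f(n)$ with $b_i$ ranging in the fixed number field $K$ gives only finitely many coefficient vectors, by Northcott's theorem. Without this the conclusion does not follow from finiteness of admissible $n$ alone.
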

A slight modification of Proposition 2.3 in  \cite{kul} yields the following, which  we use in the  proof of Theorem \ref{maintheorem2}.
\begin{prop}\label{prop-kul2}
Let $K$ be a  Galois extension over $\mathbb{Q}$  and $S$  be a finite set of places, containing all  the archimedean places.  Let 
$\lambda_0, \lambda_1,\ldots,\lambda_k$  be non-zero elements of $K$. 
Let  $\varepsilon>0$  be a positive real number and $\omega\in S$  be a distinguished place.  Let $\mathfrak{E}$ be an infinite set of solutions $(u_1,\ldots,u_k, b_1,\ldots,b_k)$ of the inequality 
\begin{equation*}\label{eq2.3}
\tag{2.1}
0<\left|\sum_{j=1}^k\lambda_j b_j u_j+\lambda_0\right|_\omega\leq \frac{\max\{|b_1 u_1|_\omega,\ldots,|b_k u_k|_\omega\}}{\left(\prod_{j=1}^k H(b_j)\right)^{k+2+\varepsilon}}\frac{1}{H(1, u_1,\ldots,u_k)^\varepsilon},
\end{equation*}
 where $u_j$'s are $S$-unit and $b_j\in K^\times$  for $1\leq j\leq k$.  Then there exists a non-trivial relation of the form 
 $$
 c_1 b_1 u_1+\cdots+c_k b_k u_k=0, \quad \mbox{where~~~} c_i\in K
 $$ 
holding for infinitely many elements of $\mathfrak{E}$.
\end{prop}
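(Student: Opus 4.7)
My plan is to apply the Subspace Theorem (\thmref{schli}) to a carefully chosen system of linear forms in $k+1$ variables, and then use a short descent to strip off any residual constant term from the resulting linear relation. For each tuple $(u_1,\ldots,u_k,b_1,\ldots,b_k)\in\mathfrak{E}$ I would form the vector $\mathbf{x}:=(b_1 u_1,\ldots,b_k u_k,1)\in K^{k+1}$. At the distinguished place $\omega$, take the $k+1$ linearly independent linear forms
$$
L_{1,\omega}(\mathbf{X})=\lambda_0 X_{k+1}+\sum_{j=1}^k\lambda_j X_j,\qquad L_{i,\omega}(\mathbf{X})=X_{i-1}\quad (2\leq i\leq k+1),
$$
and at every other $v\in S$ the coordinate forms $L_{i,v}(\mathbf{X})=X_i$. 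With this choice $L_{1,\omega}(\mathbf{x})$ is exactly the small quantity $\sum_j\lambda_j b_j u_j+\lambda_0$ of the hypothesis.

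Expanding the Subspace double product, the $S$-unit property $\prod_{v\in S}|u_j|_v=1$ reduces the non-distinguished contributions to $\prod_{v\in S}|b_j|_v\leq H(b_j)$, while the same property gives $\|\mathbf{x}\|_v=\max(|b_1|_v,\ldots,|b_k|_v,1)$ for $v\notin S$, whence $\prod_{v\in S}\|\mathbf{x}\|_v\geq H(\mathbf{x})/\prod_j H(b_j)$. Substituting the upper bound from the hypothesis and invoking the submultiplicativity $H(\mathbf{x})\leq \prod_j H(b_j)\cdot H(1,u_1,\ldots,u_k)$, the exponent $k+2+\varepsilon$ on $\prod_j H(b_j)$ and the factor $H(1,u_1,\ldots,u_k)^\varepsilon$ in the hypothesis are calibrated to absorb exactly the powers of $\prod_j H(b_j)$ and $H(1,u_1,\ldots,u_k)$ accumulated from the above estimates, producing a bound
$$
\prod_{v\in S}\prod_{i=1}^{k+1}\frac{|L_{i,v}(\mathbf{x})|_v}{\|\mathbf{x}\|_v}\leq\frac{1}{H(\mathbf{x})^{k+1+\delta}}
$$
for some $\delta>0$. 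The Subspace Theorem then forces the vectors $\mathbf{x}$ arising from $\mathfrak{E}$ to lie in a finite union of proper subspaces of $K^{k+1}$.

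By pigeonhole, one such subspace contains $\mathbf{x}$ for an infinite sub-collection of $\mathfrak{E}$, yielding a non-trivial linear relation $c_1 b_1 u_1+\cdots+c_k b_k u_k+c_{k+1}=0$ with $c_i\in K$. If $c_{k+1}=0$ we are done. Otherwise some $c_j$ with $1\leq j\leq k$ is non-zero, so solving for $b_j u_j$ and substituting into $\sum_j\lambda_j b_j u_j+\lambda_0$ produces a new inequality of exactly the same shape as the hypothesis but involving only $k-1$ of the $b_i u_i$, with new non-zero constants $\lambda'_i,\lambda'_0\in K$ (after discarding finitely many exceptional tuples so that $\lambda'_0\neq 0$). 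Iterating this finite descent on $k$, the process must terminate with a genuine homogeneous relation, since otherwise it reduces to $0<|\lambda'_0|_\omega\leq(\text{quantity tending to }0)$, which is impossible on an infinite set.

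The main obstacle is the exponent bookkeeping in the Subspace estimate: one must combine the $S$-unit identity for the $u_j$, the product-formula bound $\prod_{v\in S}|b_j|_v\leq H(b_j)$, the inequality $\prod_{v\in S}\|\mathbf{x}\|_v\geq H(\mathbf{x})/\prod_j H(b_j)$, and the submultiplicativity of the projective height so that the specific exponent $k+2+\varepsilon$ together with the extra factor $H(1,u_1,\ldots,u_k)^\varepsilon$ in the hypothesis yield a strictly positive saving $\delta$ in the final Subspace exponent. A secondary technical issue is ensuring the non-vanishing of $\lambda'_0$ at each descent step, which is arranged by excluding a finite set of tuples from the infinite sub-collection at each stage.
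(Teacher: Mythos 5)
Your proposal takes a genuinely different route from the paper's at the crucial step, and that step contains a gap. The paper first obtains a relation $a_0+a_1b_1u_1+\cdots+a_kb_ku_k=0$ (by citing Proposition~2.3 of \cite{kul}), and when $a_0\neq 0$ it solves the relation for the \emph{constant} $1=-\sum_j(a_j/a_0)b_ju_j$, multiplies by $\lambda_0$, and substitutes this into the left-hand side of \eqref{eq2.3}. This keeps all $k$ of the $b_ju_j$ in play and leaves the right-hand side of the inequality \emph{literally unchanged}, so the new inequality (2.3) holds for free; a second application of the auxiliary proposition then produces a homogeneous relation. You instead solve the relation for a \emph{variable} $b_ju_j$ and substitute, trying to descend on $k$.

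The gap is in the assertion that after eliminating $b_ju_j$ the resulting inequality has ``exactly the same shape'' as \eqref{eq2.3} in $k-1$ variables. The left-hand side is unchanged as a number, so you need the old right-hand side (which you know bounds the LHS) to be dominated by the new right-hand side built from $\max_{i\neq j}|b_iu_i|_\omega$, $\prod_{i\neq j}H(b_i)$, and $H(1,u_i:i\neq j)$. The denominators shrink and that helps you, but the numerator $\max_i|b_iu_i|_\omega$ drops to $\max_{i\neq j}|b_iu_i|_\omega$, and if $|b_ju_j|_\omega$ was the dominant term the old bound can be far larger than the new one. Attempting to control $|b_ju_j|_\omega$ via the linear relation gives $|b_ju_j|_\omega\leq C_1+C_2\max_{i\neq j}|b_iu_i|_\omega$, and the additive constant $C_1$ cannot be absorbed multiplicatively when $\max_{i\neq j}|b_iu_i|_\omega$ is small. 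So the reduced inequality is not established, and this is the load-bearing step of your descent. A secondary point: $\lambda'_0=\lambda_0-\lambda_j c_{k+1}/c_j$ is a fixed element of $K$ determined by the chosen subspace relation, not a quantity varying with the tuple, so ``discarding finitely many exceptional tuples so that $\lambda'_0\neq 0$'' does not make sense; in fact $\lambda'_0=0$ is the favorable case (you would already have a homogeneous relation). The paper's trick of substituting for the constant rather than for a variable sidesteps all of this.
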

\begin{proof}
By applying Proposition 2.3 in \cite{kul} to the inequality \eqref{eq2.3},  we get   a non-trivial relation of the form
\begin{equation*}\label{eq2.4}
\tag{2.2}
a_0+a_1 b_1 u_1+\cdots+a_k b_k u_k=0
\end{equation*}
satisfied by infinitely many tuples $(u_1,\ldots,u_k, b_1,\ldots,b_k)\in \mathfrak{E}$. In order to finish the proof, it is enough to claim the following.
\smallskip

\noindent{\bf CLAIM.~}  There exists a non-trivial relation as \eqref{eq2.4} with $a_0=0$. 
\bigskip

Assume that $a_0\neq 0$. By rewriting the relation \eqref{eq2.4}, we obtain 
\begin{equation*}
a_0=-a_1 b_1 u_1-\cdots-a_k b_k u_k\iff 1=-\left(\frac{a_1}{a_0}\right)b_1 u_1-\cdots-\left(\frac{a_k}{a_0}\right)b_k u_k.
\end{equation*}
So, $$\lambda_0=-\lambda_0\left(\frac{a_1}{a_0}\right)b_1 u_1-\cdots-\lambda_0\left(\frac{a_k}{a_0}\right)b_k u_k.$$
Substituting this in \eqref{eq2.3}, we get  
\begin{equation*}\label{eq2.5}
\tag{2.3}
0<\left|\left(\lambda_1-\frac{\lambda_0 a_1}{a_0}\right)b_1 u_1+\cdots+\left(\lambda_k-\frac{\lambda_0 a_k}{a_0}\right)b_k u_k\right|_\omega\leq \frac{\max\{|b_1 u_1|_\omega,\ldots,|b_k u_k|_\omega\}}{\left(\prod_{j=1}^k H(b_j)\right)^{k+2+\varepsilon}}\frac{1}{H(1,u_1,\ldots,u_k)^\varepsilon}
\end{equation*}
holds for infinitely many tuples $(u_1,\ldots,u_k, b_1,\ldots,b_k)\in \mathfrak{E}$. 
We then re-apply   Proposition 2.3 in \cite{kul} to \eqref{eq2.5}, and get  a non-trivial relation of the form 
$$
c_1 b_1 u_1+\cdots+c_k b_k u_k=0, 
$$
which holds for infinitely many tuples $(b_1,\ldots,b_k, u_1,\ldots,u_k)$  in $\mathfrak{E}$.   
This proves the claim and hence the proposition.
\end{proof}

\section{The key setup and results for the proof of Theorems \ref{maintheorem2}}\label{section}
We define an equivalence relation $\sim$ on $\overline{\mathbb{Q}}^\times$ as follows:
\begin{equation*}\label{eq1}
\tag{1}
\alpha_i\sim \alpha_j~~  \mbox{if there is}~~ \sigma\in G_\mathbb{Q}~~\mbox{such that}~~ \frac{\alpha_i}{\sigma(\alpha_j)}~~~ \mbox{is a root of unity}, 
\end{equation*}
where $G_\mathbb{Q}$ denotes the absolute Galois group over $\mathbb{Q}$.  
\bigskip

We need  the following lemma.
\begin{lemma}\label{lem3}     
Let $(\alpha_1,\ldots,\alpha_k)$  be a non-degenerate tuple of non-zero algebraic numbers. Let $K$ be the Galois closure of $\mathbb{Q}(\alpha_1,\ldots,\alpha_k)$ over $\mathbb{Q}$  and let $r$ be the order of the torsion subgroup of $K^\times$. Then the tuple $(\alpha^r_1,\ldots,\alpha^r_k)$ satisfies the following properties:
\begin{enumerate}
\item[(a)]
For any integer $i$ satisfying $1\leq i\leq k$, if $\beta\neq \alpha^r_i$ is a Galois conjugate to $\alpha^r_i$ over $\mathbb{Q}$, then $\frac{\beta}{\alpha^r_i}$ is not a root of unity.
\item[(b)] For any integers $i$ and $j$ satisfying $1\leq i\neq j\leq k$, if $\frac{\alpha^r_i}{\sigma(\alpha^r_j)}$ is a root of unity for some $\sigma\in\mbox{Gal}(K/\mathbb{Q})$, then $\alpha^r_i=\sigma(\alpha^r_j)$.
\item[(c)] For any integers $i$ and $j$ satisfying $1\le i,j\leq k$, if $\frac{\sigma(\alpha^r_i)}{\rho(\alpha^r_j)}$ is a root of unity for some $\sigma, \rho\in\mbox{Gal}(K/\mathbb{Q})$, then $\sigma(\alpha^r_i)=\rho(\alpha^r_j)$.
\end{enumerate}
\end{lemma}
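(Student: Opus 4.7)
The plan is to exploit a single structural observation about the number field $K$: since $K^\times$ has only finitely many torsion elements (all of them roots of unity), the torsion subgroup is a finite cyclic group of order exactly $r$. Consequently, every root of unity lying in $K$ satisfies $\zeta^r=1$, and moreover if an element $\eta\in K^\times$ has some power $\eta^m$ equal to a root of unity, then $\eta$ itself is a root of unity (because $\eta^{mN}=1$ for some $N\in\mathbb{N}$, and $\eta\in K$), and hence already $\eta^r=1$. This "kill with $r$" principle is the engine that drives all three parts.

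For part (a), I would write any Galois conjugate $\beta$ of $\alpha_i^r$ as $\beta=\tau(\alpha_i)^r$ for some $\tau\in G_\mathbb{Q}$; since $K/\mathbb{Q}$ is Galois and $\alpha_i\in K$, we have $\tau(\alpha_i)\in K$, so $\eta:=\tau(\alpha_i)/\alpha_i\in K^\times$. The hypothesis that $\beta/\alpha_i^r=\eta^r$ is a root of unity forces $\eta$ to be a root of unity in $K$ by the observation above, and therefore $\eta^r=1$, i.e.\ $\beta=\alpha_i^r$, contradicting $\beta\ne\alpha_i^r$. Parts (b) and (c) follow by the same mechanism applied respectively to $\eta:=\alpha_i/\sigma(\alpha_j)\in K^\times$ and $\eta:=\sigma(\alpha_i)/\rho(\alpha_j)\in K^\times$: in each case the hypothesis gives that $\eta^r$ is a root of unity, whence $\eta^r=1$, which rearranges to $\alpha_i^r=\sigma(\alpha_j^r)$ and $\sigma(\alpha_i^r)=\rho(\alpha_j^r)$ respectively.

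I do not anticipate a real obstacle; the only points that require care are (i) verifying that the relevant ratios actually lie in $K$ before invoking the cyclic torsion bound (this is where the Galois closure hypothesis is essential, so that all $\tau(\alpha_i)$, $\sigma(\alpha_j)$, $\rho(\alpha_j)$ stay inside $K$), and (ii) the small step that a root of an element of $K$, when it happens to be a root of unity, is itself already in $K$'s torsion—which follows trivially because such an element lies in $K$ by construction. Note that non-degeneracy of $(\alpha_1,\dots,\alpha_k)$ is not actually needed for the lemma to hold; it is assumed because the lemma will later be applied under this hypothesis, and parts (b)--(c) become genuinely useful in that setting.
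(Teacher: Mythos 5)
Your proof is correct. The paper does not supply a proof of this lemma (it says only that it ``can be easily verified''), and your argument is the natural one that fills this gap: the torsion subgroup of $K^\times$ is a finite cyclic group of order $r$, so if $\eta\in K^\times$ has $\eta^r$ equal to a root of unity, then $\eta$ is itself a root of unity lying in $K$ and therefore $\eta^r=1$; applying this to $\eta=\tau(\alpha_i)/\alpha_i$, $\eta=\alpha_i/\sigma(\alpha_j)$, and $\eta=\sigma(\alpha_i)/\rho(\alpha_j)$ (all of which lie in $K^\times$ because $K$ is Galois) gives (a), (b), (c) respectively. Your aside that the non-degeneracy hypothesis is not actually used in the proof is also accurate; it is carried along only because the lemma is later invoked in that setting.
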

The proof of this lemma can be easily verified.  

\bigskip


\noindent\textbf{Remark 1.}
  Without loss of generality, we can assume that  the tuple $(\alpha_1,\ldots,\alpha_k)$ satisfies (a), (b) and (c) of Lemma \ref{lem3} with $r=1$; otherwise we work with the tuple  $(\alpha_1^r,\ldots,\alpha_k^r).$ Also, by using part (a) and (b) of Lemma \ref{lem3}, after replacing $(\alpha_1,\ldots,\alpha_k)$ by $(\alpha_1^r,\ldots,\alpha_k^r)$, the equivalence relation defined by (\ref{eq1}) becomes trivial, i.e. $\alpha_i\sim \alpha_j$ if and only if $\alpha_i=\alpha_j.$
\bigskip

Given a set of non-zero algebraic numbers $\alpha_1,\ldots,\alpha_k$, under the equivalence relation given by \eqref{eq1}, we have the following partition
$$
\{\alpha_1,\ldots,\alpha_k\}=\cup_{i=1}^s S_i=\cup_{i=1}^s\{\alpha_{i,1},\ldots,\alpha_{i,m_i}\}.
$$
We also relabel the numbers $\lambda_1,\ldots,\lambda_k$ as $\lambda_{i,1},\ldots,\lambda_{i,m_i}$ for $1\leq i\leq s$. Under these notations, we can express the sum $\sum_{i=1}^k \lambda_i \alpha^n_i$ as  
\begin{equation*}\label{eq2}
\tag{2}
\sum_{i=1}^k \lambda_i \alpha^n_i=\sum_{i=1}^s\sum_{j=1}^{m_i}\lambda_{i,j}\alpha^n_{i,j},
\end{equation*}
and denote the tuple $(n,\lambda_1,\ldots,\lambda_k)$ by $(n, \lambda_{i,j})_{i,j}$. Let $(\alpha_1,\ldots,\alpha_k)$  be a non-degenerate tuple of non-zero algebraic numbers. If needed, by replacing the tuple $(\alpha_1,\ldots,\alpha_k)$ with $(\alpha_1^r,\ldots,\alpha_k^r)$ in \eqref{eq2}, by part (b) of Lemma \ref{lem3}, we can assume that for $1\leq i\leq s$, the elements $\alpha_{i,1},\ldots,\alpha_{i,m_i}$  are Galois conjugate over $\mathbb{Q}$ to each other. We let $d_i\geq m_i$  denote the number of all possible Galois conjugates of $\alpha_{i,1}$ over $\mathbb{Q}$.  We now denote by $\alpha_{i,m_i+1},\ldots,\alpha_{i,d_i}$  all the other conjugates of  $\alpha_{i,1}$ that do not appear in $\{\alpha_{i,1},\ldots,\alpha_{i,m_i}\}$.  For every $\sigma\in \mbox{Gal}(L/\mathbb{Q})$  and $1\leq i\leq s$,  we denote $\sigma(\alpha_{i,j})=\alpha_{i,\sigma_i(j)}$ for $1\leq j\leq d_i$, where $L$ denotes the Galois closure of $K=\mathbb{Q}(\alpha_1,\ldots,\alpha_k, \lambda_1,\ldots,\lambda_k)$ over $\mathbb{Q}$ and  $\{\sigma_i(1),\ldots,\sigma_i(d_i)\}$ is a permutation of $\{1,\ldots,d_i\}$. 
\bigskip

We also need the following lemma from \cite[Lemma 3.1]{kul}.
\begin{lemma}\label{lem2}
Let $K$ be a number field of degree $d$  and let $\{\omega_1, \ldots,\omega_d\}$ be a  $\mathbb{Q}$-basis  for $K$. Then there exist constants $C_1$ and $C_2$ depending only on the $\omega_i$'s such that for every $\alpha\in K$,  we can  write $\alpha=\sum_{i=1}^d b_i\omega_i$, where $b_i\in\mathbb{Q}$ satisfying $h(b_i)\leq C_1 h(\alpha)+C_2$  for $1\leq i\leq d$.
\end{lemma}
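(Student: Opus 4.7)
The plan is to recover each coefficient $b_i$ explicitly as a rational expression in the conjugates of $\alpha$ via Cramer's rule, and then use the standard functorial properties of the absolute logarithmic height to bound $h(b_i)$.

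First I would pass to a Galois closure $L/\mathbb{Q}$ containing $K$, and let $\sigma_1, \ldots, \sigma_d$ be the distinct embeddings of $K$ into $L$. Writing $\alpha = \sum_{j=1}^d b_j \omega_j$ with $b_j \in \mathbb{Q}$ (so $\sigma_k(b_j) = b_j$), applying each $\sigma_k$ gives the linear system
\begin{equation*}
M \mathbf{b} = \mathbf{a}, \qquad \text{where } M = (\sigma_k(\omega_j))_{k,j}, \quad \mathbf{b} = (b_j)_j, \quad \mathbf{a} = (\sigma_k(\alpha))_k.
\end{equation*}
Since $\{\omega_1, \ldots, \omega_d\}$ is a $\mathbb{Q}$-basis of $K$, the matrix $M$ is invertible (its squared determinant is essentially the discriminant of the basis, which is nonzero). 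Hence by Cramer's rule, for each $i$,
\begin{equation*}
b_i = \frac{\det(M_i)}{\det(M)},
\end{equation*}
where $M_i$ is obtained from $M$ by replacing its $i$-th column with $\mathbf{a}$.

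Next, I would expand $\det(M_i)$ along its $i$-th column to write it as a $\mathbb{Z}$-linear combination
\begin{equation*}
\det(M_i) = \sum_{k=1}^{d} (-1)^{k+i}\, \sigma_k(\alpha)\, \Delta_{k,i},
\end{equation*}
where each minor $\Delta_{k,i}$ is a polynomial expression in the $\sigma_\ell(\omega_j)$'s, so its height depends only on the basis $\{\omega_1, \ldots, \omega_d\}$. Using the standard inequalities $h(x+y) \le h(x) + h(y) + \log 2$, $h(xy) \le h(x) + h(y)$, and invariance under Galois conjugation $h(\sigma_k(\alpha)) = h(\alpha)$, I can bound
\begin{equation*}
h(\det(M_i)) \le h(\alpha) + C_2'
\end{equation*}
for some $C_2'$ depending only on the basis. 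Since $1/\det(M)$ is a fixed nonzero algebraic number depending only on the $\omega_j$'s, combining these estimates gives
\begin{equation*}
h(b_i) \le h(\det(M_i)) + h(1/\det(M)) \le h(\alpha) + C_2
\end{equation*}
for a constant $C_2$ depending only on the $\omega_j$'s. Taking $C_1 = 1$ (or any constant $\ge 1$) gives the desired bound.

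There is no substantive obstacle; the argument is bookkeeping with the subadditivity/submultiplicativity of $h$ and Galois-invariance of the height. The only mildly delicate point is making sure the constant $C_2$ genuinely depends only on the basis and not on $\alpha$, which is immediate once one notices that the minors $\Delta_{k,i}$ and $\det(M)$ involve only the fixed elements $\sigma_k(\omega_j)$.
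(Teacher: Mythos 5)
The paper does not prove this lemma; it cites it directly from Kulkarni--Mavraki--Nguyen (their Lemma 3.1), so there is no "paper's proof" to compare against. Your Cramer's-rule approach is a standard and perfectly valid way to establish the statement, but there is an arithmetic slip in the height bookkeeping.

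When you expand $\det(M_i)=\sum_{k=1}^{d}(-1)^{k+i}\sigma_k(\alpha)\Delta_{k,i}$ and apply $h(x+y)\le h(x)+h(y)+\log 2$ together with $h(\sigma_k(\alpha))=h(\alpha)$, each of the $d$ summands contributes $h(\alpha)$ up to a constant, so subadditivity yields
\[
h(\det(M_i))\ \le\ d\,h(\alpha)+C_2',
\]
not $h(\alpha)+C_2'$ as you wrote. Consequently the conclusion ``$C_1=1$'' does not follow, and in fact it is false in general: take $K=\mathbb{Q}(\sqrt{2})$ with basis $\{1,\sqrt{2}\}$ and $\alpha=a+b\sqrt{2}$ a unit of large height (so $|a+b\sqrt{2}|\cdot|a-b\sqrt{2}|=1$). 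Then $h(\alpha)=\tfrac12\log|a+b\sqrt2|$ while $b_1=a\approx\tfrac12|a+b\sqrt2|$, so $h(b_1)\approx 2h(\alpha)$; any valid $C_1$ must be at least $d=2$ here. Fortunately the lemma only asserts the existence of \emph{some} constants $C_1,C_2$, so your argument still proves it once the bound is corrected to $C_1=d$; the structural idea (Cramer's rule, cofactor expansion, Galois-invariance of $h$, fixed basis-dependent constants) is sound.
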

The following Proposition is  very crucial for the proof of   Theorem \ref{maintheorem2}.
\begin{prop}\label{propnew}
Let $\alpha_1,\ldots,\alpha_k$,$\lambda_1,\ldots,\lambda_k$,  $f$, $\mathcal{A}$  be as in Theorem \ref{maintheorem2}.
Let $\mathcal{A}_0$ be an infinite subset of $\mathcal{A}$.  Let $p$ be the nearest integer to $q\sum_{i=1}^k\lambda_i\alpha^n_i=q\sum_{i=1}^s\sum_{j=1}^{m_i}\lambda_{i,j}\alpha^n_{i,j}$. Then there exists an infinite subset $\mathcal{A}_1$ of $\mathcal{A}_0$ such that for every tuples $(n,q,\lambda_{i,j})_{i,j}\in\mathcal{A}_0$, we can write $p=q\sum_{i=1}^s\sum_{j=1}^{d_i}\eta_{i,j}\alpha^n_{i,j}$  with the  following properties:
\begin{enumerate}
\item[(i)] $\eta_{i,j}\in L$ and $h(\eta_{i,j})=o(n)$ for $1\leq i\leq s$ and $1\leq j\leq d_i$. 
\item[(ii)] For every $\sigma\in\mathrm{Gal}(L/\mathbb{Q})$  and $1\leq i\leq s$, let $\sigma_i$ denotes the induced permutation on $\{1,\ldots,d_i\}$. Then, we have $\sigma(\eta_{i,j})=\eta_{i,\sigma_i(j)}$ for each pair $(i,j)$ with $1\leq i\leq s$ and $1\leq j\leq d_i$.

\item[(iii)]\label{iii} $\lambda_{i,j}=\eta_{i,j}$  for $1\leq i\leq s$ and $1\leq j\leq m_i.$

\item[(iv)]\label{iv}Let $B$ be the set of $\gamma$ such that $\gamma\notin\{\lambda_{i,j}\alpha^n_{i,j}:1\leq i\leq s, 1\leq j\leq m_i\}$  and $\gamma$ be a Galois conjugate over $\mathbb{Q}$ to $\lambda_{i,j}\alpha^n_{i,j}$  for some pair $(i,j)$  with $1\leq i\leq s, 1\leq j\leq m_i.$  Then the elements $\eta_{i,j}\alpha^n_{i,j}$ for $1\leq i\leq  s$ and $m_i<j\leq d_i$ are distinct and are exactly all the elements of $B$. 
\end{enumerate}
\end{prop}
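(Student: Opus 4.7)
The plan is to construct the $\eta_{i,j}$ by Galois-equivariant extension of the $\lambda_{i,j}$'s, then pass to an infinite subset $\mathcal{A}_1$ using \propref{prop-kul2} on which both the Galois equivariance (ii) and the exact identity hold. By Remark~1, we may assume $(\alpha_{i,j})$ satisfies the conclusions of \lemref{lem3}, so distinct $\alpha_{i,j}$'s have Galois-conjugate ratios that are never roots of unity except when equal.

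I would set $\eta_{i,j}:=\lambda_{i,j}$ for $j\leq m_i$, and for each $(i,j)$ with $m_i<j\leq d_i$ fix once and for all a Galois automorphism $\tau_{i,j}\in\mathrm{Gal}(L/\mathbb{Q})$ with $\tau_{i,j}(\alpha_{i,1})=\alpha_{i,j}$ -- such a $\tau_{i,j}$ exists since $\alpha_{i,1},\ldots,\alpha_{i,d_i}$ is the full Galois orbit of $\alpha_{i,1}$ -- and set $\eta_{i,j}:=\tau_{i,j}(\lambda_{i,1})$. Then $\eta_{i,j}\alpha^n_{i,j}=\tau_{i,j}(\lambda_{i,1}\alpha^n_{i,1})$ is a Galois conjugate of $\lambda_{i,1}\alpha^n_{i,1}$. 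Property (i) follows from the Galois invariance of the Weil height, giving $h(\eta_{i,j})\leq\max_{i,j} h(\lambda_{i,j})<f(n)=o(n)$; property (iii) is by construction; property (iv) follows from \lemref{lem3}(c), which rules out accidental collisions between distinct Galois conjugates, so $\{\eta_{i,j}\alpha^n_{i,j}:m_i<j\leq d_i\}$ is a set of distinct elements exhausting $B$.

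The substantive content lies in (ii) and the exact identity $p=q\sum\eta_{i,j}\alpha^n_{i,j}$. The equivariance $\sigma(\eta_{i,j})=\eta_{i,\sigma_i(j)}$ reduces, for indices with $j,\sigma_i(j)\leq m_i$, to the statement $\sigma(\lambda_{i,j})=\lambda_{i,\sigma_i(j)}$; the remaining cases are automatic from the choice of $\tau_{i,j}$. To establish this equivariance on an infinite subset, I would apply \propref{prop-kul2} with distinguished archimedean place $\omega$: enlarge $S$ to some $S'$ so that $q\alpha^n_{i,j}$ is an $S'$-unit, take $u_{i,j}:=q\alpha^n_{i,j}$, $b_{i,j}:=\lambda_{i,j}$, and also $u_0:=1$, $b_0:=p$, with the coefficients $\lambda^*$-side chosen so that the relevant sum becomes $p-q\sum_{j\leq m_i}\lambda_{i,j}\alpha^n_{i,j}$. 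Using $h(\lambda_{i,j})<f(n)=o(n)$, the bound $H(q\alpha^n_{i,j})\leq q^d\exp(nh(\alpha_{i,j})+O(1))$, and the hypothesis $|\alpha_{i,j}|\geq 1$, one checks that \eqref{eq1.2} implies \eqref{eq2.3}. \propref{prop-kul2} then produces a fixed non-trivial relation $c_0 p+\sum c_{i,j}\lambda_{i,j}q\alpha^n_{i,j}=0$ valid on an infinite subset, and \propref{prop-kul1} applied to the non-degenerate tuple with coefficients of $o(n)$-height constrains this relation to encode precisely the Galois symmetries required for (ii), possibly after iterating the argument with different distinguished places and passing to nested infinite subsets.

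With (ii) secured, $P_n:=q\sum_{i,j}\eta_{i,j}\alpha^n_{i,j}$ is Galois invariant and hence rational, and satisfies $|p-P_n|\leq|p-q\Sigma|+q|\Sigma'|$ with $\Sigma':=\sum_{j>m_i}\eta_{i,j}\alpha^n_{i,j}$. A further application of \propref{prop-kul2} to the quantity $p-P_n$, combined with denominator control on the rational number $P_n-p$, forces the exact identity $p=P_n$ on the final infinite subset $\mathcal{A}_1$. The main obstacle is twofold: first, verifying the hypotheses of \propref{prop-kul2} with $p$ playing the role of a variable ``coefficient'' $b_0$ requires precise bookkeeping of the competing exponential growth of $|\alpha_{i,j}|^n$ against the sublinear height of $\lambda_{i,j}$ and the possibly large height of $q$; and second, extracting the full Galois equivariance (ii) from individual vanishing relations requires iterating the argument over all relevant $\sigma\in\mathrm{Gal}(L/\mathbb{Q})$ while passing to successively smaller infinite subsets, ensuring compatibility at each stage via \propref{prop-kul1}.
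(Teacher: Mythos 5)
Your construction of the $\eta_{i,j}$ for $j>m_i$ via a fixed $\tau_{i,j}\in\mathrm{Gal}(L/\mathbb{Q})$ with $\tau_{i,j}(\alpha_{i,1})=\alpha_{i,j}$ is not well-defined: the stabiliser of $\alpha_{i,1}$ in $\mathrm{Gal}(L/\mathbb{Q})$ has no reason to fix $\lambda_{i,1}$, so $\tau_{i,j}(\lambda_{i,1})$ genuinely depends on the choice of coset representative. More fundamentally, you are trying to build the $\eta_{i,j}$ a priori and then \emph{verify} that $p=q\sum\eta_{i,j}\alpha_{i,j}^n$; but this identity is the essential output of the Subspace Theorem, not something that can be checked against a candidate built by hand. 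The paper instead applies Theorem~\ref{schli} to the vector $\mathbf{x}=(p,\,qy_{\mathbf{b}})$ (after expanding each $\lambda_{i,j}$ over a rational basis via Lemma~\ref{lem2} and passing to an infinite subset on which the span of the $\mathbf{y}$'s has minimal dimension $\tau$), and the resulting linear relation $a_1p+q\sum a_{\mathbf{b}}y_{\mathbf{b}}=0$ with $a_1\neq 0$ is what produces the $\eta_{i,j}$ at all. The $q^{-\varepsilon}$ factor in \eqref{eq1.2} is then used precisely to offset the $q^{|\mathfrak{L}^*|}=q^{\tau}\leq q^{d}$ contribution from the product formula; this is the delicate point your sketch elides.

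Two further problems in the proposed route. First, to ``force the exact identity $p=P_n$'' you bound $|p-P_n|$ by the tail $q\bigl|\sum_{j>m_i}\eta_{i,j}\alpha_{i,j}^n\bigr|$, but for this to be small you would need $|\alpha_{i,j}|<1$ for $j>m_i$ — which is precisely item (ii) of Theorem~\ref{maintheorem2}, proved downstream of this proposition; the argument is circular. (Also $P_n$ is rational only with an uncontrolled denominator, so ``$|p-P_n|$ small'' does not yield $p=P_n$ by itself.) Second, you propose to make $q\alpha_{i,j}^n$ an $S'$-unit; since $q$ ranges over all of $\mathbb{N}$ along $\mathcal{A}$, no fixed finite $S'$ works, and Proposition~\ref{prop-kul2} requires $S$ fixed. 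The paper keeps $S$ fixed so that only $\alpha_{i,j}$ is an $S$-unit, and carries the factor $q$ separately through the product-formula estimate. Your observation that (iii) (once established) encodes a genuine Galois constraint $\sigma(\lambda_{i,j})=\lambda_{i,\sigma_i(j)}$ on $\mathcal{A}_1$ is correct, but that constraint is extracted from inequality \eqref{eq1.2} via the Subspace Theorem — it is a conclusion, and cannot be fed in at the level of defining the $\eta_{i,j}$.
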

\begin{proof}

For the proof of this proposition, we proceed along similar lines to the proof of Proposition 3.4 in \cite{kul}. We first observe that along an infinite subset of $\mathcal{A}$, $n$ cannot be fixed. If this is the case, then the assumption that $h(\lambda_i)<f(n)$ implies $h(\lambda_i)$ is bounded for each $1\leq i\leq k$. Since all $\lambda_i$ belongs to a fixed number field $K$, by the Northcott property, there are only finitely many such $\lambda_i.$ Using
$$
0<\Vert\lambda_1 q\alpha^n_1+\cdots+\lambda_k q\alpha^n_k\Vert<\frac{\theta^n}{q^{d+\varepsilon}}<\frac{1}{q^{1+\varepsilon}}
$$ and each $\alpha_i$ is an  algebraic number, by applying Roth's theorem, we conclude that there are only finitely many such tuples $(n, q, \lambda_1, \ldots, \lambda_k)$ with bounded $n$. Hence, $n$ cannot be fixed along infinitely many tuples $(n, q, \lambda_1, \ldots, \lambda_k) \in \mathcal{A}$.

\vspace{.2cm}

For every $v\in M^\infty_L$, fix $\sigma_v\in \mbox{Gal}(L/\mathbb{Q})$  such that $v$ corresponds to the automorphism $\sigma^{-1}_v$. In other words, for every $\alpha\in L$, we have 
\begin{equation*}\label{eq3.1}
\tag{3.1}
|\alpha|_v=|\sigma^{-1}_v(\alpha)|^{d(L)/[L:\mathbb{Q}]},
\end{equation*}
where  $| \cdot |$ denotes the usual complex  absolute value in $\mathbb{C}$ and $d(L)=1$ if $L\subset\mathbb{R}$ and $d(L)=2$ otherwise.  
\smallskip

For $(n, q, \lambda_{i,j})_{i,j}\in \mathcal{A}$,  by Lemma \ref{lem2}, we  write 
\begin{equation*}\label{eq3.2}
\tag{3.2}
\lambda_{i,j}=\sum_{\ell=1}^{d'} b_{i,j,\ell}\omega_\ell
\end{equation*}
where  $b_{i,j,\ell}\in\mathbb{Q}$ and $d'=[\mathbb{Q}(\lambda_1,\ldots,\lambda_k):\mathbb{Q}]$.  Let $p$ be the nearest integer to $q\sum_{i=1}^s\sum_{j=1}^{m_i}\lambda_{i,j}\alpha^n_{i,j}$. Then from \eqref{eq3.2} and \eqref{eq1.2}, we get 
\begin{equation*}\label{eq3.3}
\tag{3.3}
\left|\sum_{i=1}^s\sum_{j=1}^{m_i} q \lambda_{i,j}\alpha^n_{i,j}-p\right|=\left|\sum_{i=1}^s\sum_{j=1}^{m_i}\sum_{\ell=1}^{d'} q\omega_\ell  b_{i,j,\ell}\alpha^n_{i,j}-p\right|<\frac{\theta^n}{q^{d+\varepsilon}}
\end{equation*}
holds for all $(n, q, \lambda_{i,j})_{i,j}\in\mathcal{A}$.
From \eqref{eq3.1} and the formula  $\sum_{v\in M^\infty_L}d(L)=[L:\mathbb{Q}]$, we have 
\begin{align*}
\prod_{v\in M^\infty_L}\left|\sum_{i=1}^s\sum_{j=1}^{m_i}\sum_{\ell=1}^{d'} q \sigma_v(\omega_\ell)  b_{i,j,\ell} \alpha^n_{i,\sigma_{v,i}(j)}-p\right|_v &=\prod_{v\in M^\infty_L}\left|\sum_{i=1}^s\sum_{j=1}^{m_i}\sum_{\ell=1}^{d'} q\omega_\ell  b_{i,j,\ell}\alpha^n_{i,j}-p\right|^{\frac{d(v)}{[L:\mathbb{Q}]}}\\
&=\left|\sum_{i=1}^s\sum_{j=1}^{m_i}\sum_{\ell=1}^{d'} q \omega_\ell  b_{i,j,\ell}\alpha^n_{i,j}-p\right|
\end{align*}
holds for all $(n,q, \lambda_{i,j})_{i,j}\in\mathcal{A}$, where for  each  $v\in M^\infty_L$ and  $1\leq i\leq s$, we have set $\sigma_v(\alpha_{i,j})=\alpha_{i,\sigma_{v,i}(j)}$ and   $\{\sigma_{v,i}(1),\ldots,\sigma_{v,i}(m_i)\}$  is  a permutation of $\{1,\ldots,d_i\}$, and  $d_i$ denotes the degree of  an algebraic number $\alpha_i$. Thus from \eqref{eq3.3}, we have 
\begin{equation*}\label{eq3.4}
\tag{3.4}
\prod_{v\in M^\infty_L}\left|\sum_{i=1}^s\sum_{j=1}^{m_i}\sum_{\ell=1}^{d'} q \sigma_v(\omega_\ell)  b_{i,j,\ell} \alpha^n_{i,\sigma_{v,i}(j)}-p\right|_v<\frac{\theta^n}{q^{d+\varepsilon}}\cdotp
\end{equation*}
Let $\mathfrak{L}:=\{(i,j_1,j_2,\ell):1\leq i\leq s, 1\leq j_1\leq m_i, 1\leq j_2\leq d_i, 1\leq \ell\leq d'\}$.   For each $(n, q, \lambda_{i,j})_{i,j}\in \mathcal{A}$, we associate a vector ${\bf y}:={\bf y}(n,\lambda_{i,j})_{i,j}=(b_{i,j_1,\ell}\alpha^n_{i,j_2}: 1\leq i\leq s, 1\leq j_1\leq m_i, 1\leq j_2\leq d_i, 1\leq \ell\leq d')$, whose  components are indexed by $\mathfrak{L}$  and defined to be $y_{(i,j_1, j_2,\ell)}= b_{i,j_1,\ell}\alpha^n_{i,j_2}$  for $(i,j_1,j_2,\ell)\in\mathfrak{L}$. 
\smallskip

For $v\in M^\infty_L$  and ${\bf a}=(i,j_1,j_2,\ell)\in\mathfrak{L}$, define 
$$
\delta_{v,\bf a}:=\sigma_v(\omega_\ell) \mbox{~~if~} \sigma_{v,i}(j_1)=j_2, ~~\mbox{otherwise~~} 0.
$$
With this notation, the inequality \eqref{eq3.4} can be rewritten as 
\begin{equation*}\label{eq3.5}
\tag{3.5}
\prod_{v\in M^\infty_L}\left|\sum_{\bf a\in \mathfrak{L}}q \delta_{v,\bf a} y_{\bf a}-p\right|_v<\frac{\theta^n}{q^{d+\varepsilon}} \cdotp
\end{equation*}
We choose an infinite subset $\mathcal{A'}$ of $\mathcal{A}$ such that the vector space over $L$ generated by the set of vectors $\{{\bf y}(n,\lambda_{i,j})_{i,j}: (n,q,\lambda_{i,j})_{i,j}\in\mathcal{A'}\}$ has minimal dimension. We denote this vector space by $V$ and let $\mbox{dim}_L(V)=\tau$. Since $b_{i, j_1,\ell}\in\mathbb{Q}$,
by  applying Gaussian elimination to this system of vectors, we obtain a new system of vectors such that number of non-zero entries in each vectors is less than $d_1+\cdots+d_k$. In other words,  we get   a subset $\mathfrak{L}^*$ of $\mathfrak{L}$  consisting of $\tau$ elements with  $\tau\leq d=d_1+d_2+\cdots+d_k$. Then  every vector $Y_{\bf a}\in V$ can be  written as
$$
Y_{\bf a}=\sum_{\bf b\in\mathfrak{L}^*}c_{\bf a, b}Y_{\bf b}\quad \mbox{for all}~~{\bf a}\in \mathfrak{L},
$$
where $c_{\bf a, b}\in L$. Thus for every  ${\bf a}\in \mathfrak{L}\backslash\mathfrak{L}^*$, the corresponding component $y_{\bf a}$ of the vector $Y_{\bf a}$ given by
$
y_{\bf a}=\sum_{\bf b\in\mathfrak{L}^*}c_{\bf a, b}y_{\bf b}.
$
Consequently,  for every $v\in M^\infty_L$, we can write $\displaystyle\sum_{{\bf a}\in \mathfrak{L}}\delta_{v,\bf a} y_{\bf a}=\sum_{\bf b\in\mathfrak{L}^*}\tilde{c}_{v, \bf b}y_{\bf b}$. Therefore the inequality \eqref{eq3.5} can in turn rewrite as  
\begin{equation*}\label{eq3.6}
\tag{3.6}
\prod_{v\in M^\infty_L}\left|\sum_{{\bf b}\in \mathfrak{L}^*} q \tilde{c}_{v, \bf b} y_{\bf b}-p\right|_v<\frac{\theta^n}{q^{d+\varepsilon}}\cdotp
\end{equation*}
Now for each $v\in S$, we define $\tau+1$ linearly independent linear forms in $\tau+1$ variables as follows: for each $v\in M^\infty_L$, let 
\begin{align*}
L_{v,1}({\bf X})&=\sum_{{\bf b}\in \mathfrak{L}^*}\tilde{c}_{v, b} X_{\bf b}-X_1
\end{align*}
and  $L_{v,\bf b}({\bf X})=X_{\bf b}$ for ${\bf b}\in \mathfrak{L}^*$.  If $v\in S\backslash {M^\infty_L}$, define $L_{v,1}({\bf X})=X_1$ and $L_{v,\bf b}({\bf X})=X_{\bf b}$. Clearly,  we see that for each $v\in S$, the above linear forms are linearly independent.
\smallskip

For each $(n,q, \lambda_{i,j})_{i,j}\in\mathcal{A'}$, we define the vector ${\bf x}$  whose coordinates are denoted as $x_1$  and $x_{\bf b}$ for ${\bf b}\in \mathfrak{L}^*$ as follows:
$x_1=p$ and $x_{\bf b}=qy_{\bf b}$. We are now ready to apply the Subspace Theorem, namely Theorem \ref{schli}.  In order to apply  Theorem \ref{schli}, we need to calculate  the following quantity
$$
\prod_{v\in S}\left(\frac{|L_{v,1}({\bf x)}|_v}{||{\bf x}||_v}\prod_{{\bf b}\in \mathfrak{L}^*}\frac{|L_{v,\bf b}{(\bf x)}|_v}{||{\bf x}||_v}\right)\cdotp
$$

 Using the fact that $L_{v,\bf b}({\bf x})=q y_{\bf b}$, for each ${\bf b}\in\mathfrak{L}^*$ and that $\alpha_{i,j}$ are $S$-unit for every pair $(i,j)$, by the product formula
$$
\prod_{v\in S}\prod_{{\bf b}\in \mathfrak{L}^*}|L_{v,\bf b}{(\bf x)}|_v=\prod_{v\in S}\prod_{{\bf b}\in \mathfrak{L}^*} |q|_v \prod_{v\in S}\prod_{(i,j_1,j_2,\ell)\in \mathfrak{L}^*}|b_{i,j_1,\ell}\alpha^n_{i,j_2}|_v\leq \prod_{{\bf b}\in \mathfrak{L}^*} \left(\prod_{v\in S}|q|_v\right) \prod_{{\bf b}\in \mathfrak{L}^*}H(b_{i,j_1,\ell}). 
$$
Let $B=\max\{H(b_{i,j,\ell}):1\leq i\leq s, 1\leq j\leq d_i, 1\leq \ell\leq d'\}$. Then by Lemma \ref{lem2}, we have $B<e^{f(n)}$. Let $N=d'(d_1+\ldots+d_s)$, which is the number of triples $(i,j,\ell)$.
\vspace{.2cm}

Using the formula $\sum_{v\in M^\infty_L}d(v)=[L:\mathbb{Q}]$, $|\mathfrak{L}^*|=\tau$  and the fact that $\tau\leq d$, we get
$$
 \prod_{v\in S}\prod_{{\bf b}\in \mathfrak{L}^*}|L_{v,\bf b}{(\bf x)}|_v\leq \prod_{{\bf b}\in \mathfrak{L}^*}\left(\prod_{v\in S} |q|_v\right) \prod_{{\bf b}\in \mathfrak{L}^*}H(b_{i,j_1,\ell}) \leq (qB)^{|\mathfrak{L}^*|}=q^d B^\tau.
$$ 
From \eqref{eq3.4} and the integrality of $p$, we have 
\begin{equation*}\label{eq3.7}
\tag{3.7}
\prod_{v\in S}\left(\frac{|L_{v,1}({\bf x)}|_v}{||{\bf x}||_{v}}\prod_{{\bf b}\in \mathfrak{L}^*}\frac{|L_{v,\bf b}{(\bf x)}|_v}{
||{\bf x}||_{v}}\right)\leq \frac{q^d \theta^n}{q^{d+\varepsilon}}\frac{B^\tau}{\left(\prod_{v\in S}||{\bf x}||_v\right)^{|\mathfrak{L}^*|+1}}= \frac{ \theta^n}{q^\varepsilon}\frac{B^\tau}{\left(\prod_{v\in S}||{\bf x}||_v\right)^{\tau+1}}\cdotp
\end{equation*}

We estimate the denominator in \eqref{eq3.7} as
$$
\prod_{v\in S}||{\bf x}||_v=\frac{H({\bf x})}{\prod_{v\notin S}||{\bf x}||_v}\geq \frac{H({\bf x})}{\prod_{i,j,\ell}(H(b_{i,j,\ell}))}\geq \frac{H({\bf x})}{B^N}\cdotp
$$
Thus, from \eqref{eq3.7}, we obtain 
\begin{equation*}\label{eq3.8}
\tag{3.8}
\prod_{v\in S}\frac{|L_{v,1}({\bf x)}|_v}{||{\bf x}||_{v}}\prod_{{\bf b}\in \mathfrak{L}^*}\frac{|L_{v,\bf b}{(\bf x)}|_v}{||{\bf x}||_{v}}\leq \frac{\theta^n B^{\tau+N(\tau+1)}}{q^{\varepsilon}H({\bf x})^{\tau+1}}\cdotp
\end{equation*}
Notice that 
\begin{align*}
H({\bf x})&=\prod_{v\in M_L}\max\{ |p|_v, |q y_{\bf b}|_v:~  {\bf b}\in \mathfrak{L}^*\}\leq \max\{|p|, |q|\}\prod_{v\in M_L}\max\{1,|y_{\bf b}|_v:~  {\bf b}\in \mathfrak{L}^*\}\\
&\leq \max\{|p|, |q|\}\prod_{v\in S}\max\{1,|\alpha^n_{i,j}|_v:~1\leq i\leq s, 1\leq j\leq d_i\}\times\\
&\hspace{.4cm}\prod_{v\in M_L}\max\{1, |b_{i,j,\ell}|:1\leq i\leq s, 1\leq j\leq d_i, 1\leq \ell\leq d'\}\\
&=\max\{|p|, |q|\}\prod_{i,j}H(\alpha_{i,j})^n \cdot \prod_{i,j,\ell} H(b_{i,j,\ell})\leq \max\{|p|, |q|\} B^{N}\prod_{i,j}H(\alpha_{i,j})^n.
\end{align*}
Since $H(\lambda_{i,j})<e^{f(n)}$ for $1\leq i\leq s$ and $1\leq j\leq m_i$, from Lemma \ref{lem2}, we have $H(b_{i,j,\ell})< e^{f(n)}$ for $1\leq \ell\leq d'$.  By \eqref{eq3.3}, we have 
$$
|p|\leq \left|q\sum_{i=1}^s\sum_{j=1}^{m_i}\lambda_{i,j}\alpha^n_{i,j}\right|+1\leq |q| C^n
$$
for sufficiently large $n$ along the tuples $(n, q, \lambda_{i,j})_{i,j}\in\mathcal{A}'$ and some constant $C>1$ depending on number field $K$,  $\alpha_{i,j}$'s and the sublinear function $f$. 
Now, by combining both the above estimates together with the fact that $n\to\infty$ along the infinite set $\mathcal{A}'$, we deduce that 
\begin{equation*}\label{eq3.9}
\tag{3.9}
H({\bf x})=\prod_{v\in M_L}\max\{ |p|_v, |q y_{\bf b}|_v:~  {\bf b}\in \mathfrak{L}^*\}< q C^n_1
\end{equation*}
for all sufficiently large values of $n$ and some constant $C_1>1$ depending on $H(\alpha_{i,j})$'s and $B$.  
\bigskip

Using $\theta\in (0,1)$ and  $B<e^{f(n)}$, where $f(n)$ is a sub-linear function, together with  \eqref{eq3.8},  there exists $0<\theta<\theta'<1$ such that 
$$
\prod_{v\in S}\left(\frac{|L_{v,1}({\bf x)}|_v}{||{\bf x}||_{v}}\prod_{{\bf b}\in \mathfrak{L}^*}\frac{|L_{v,\bf b}{(\bf x)}|_v}{||{\bf x}||_{v}}\right)\leq \frac{\theta^n B^{\tau+N(\tau+1)}}{ q^\varepsilon H({\bf x})^{\tau+1}}\leq \frac{\theta'^n}{ q^\varepsilon H({\bf x})^{\tau+1}}
$$
for all $n$ sufficiently large along $(n,q,\lambda_{i,j})_{i,j}\in\mathcal{A}'$. Choose $\varepsilon'>0$ such that $0<\varepsilon'<-\frac{\log \theta'}{\log C_1}$. Then we have 
$$
\prod_{v\in S}\left(\frac{|L_{v,1}({\bf x)}|_v}{||{\bf x}||_{v}}\prod_{{\bf b}\in \mathfrak{L}^*}\frac{|L_{v,\bf b}{(\bf x)}|_v}{||{\bf x}||_{v}}\right)\leq\frac{\theta'^n}{ q^\varepsilon H({\bf x})^{\tau+1}}\leq \frac{1}{ C_1^{\varepsilon' n}q^\varepsilon H({\bf x})^{\tau+1}}
$$
for all $n$ sufficiently large along $(n,q,\lambda_{i,j})_{i,j}\in\mathcal{A}'$.  Now we set $\varepsilon''=\min \{\varepsilon, \varepsilon'\}$. Thus from \eqref{eq3.9}, we conclude that  
$$
\prod_{v\in S}\left(\frac{|L_{v,1}({\bf x)}|_v}{||{\bf x}||_{v}}\prod_{{\bf b}\in \mathfrak{L}^*}\frac{|L_{v,\bf b}{(\bf x)}|_v}{||{\bf x}||_{v}}\right)\leq\frac{1}{ C_1^{\varepsilon' n}q^\varepsilon H({\bf x})^{\tau+1}}\leq \frac{1}{H({\bf x})^{\tau+1+\varepsilon''}}
$$
for all $n$ sufficiently large along $(n,q,\lambda_{i,j})_{i,j}\in\mathcal{A}'$. 
By Theorem \ref{schli}, there exists  a non-trivial relation of the form 
\begin{equation*}\label{eq3.10}
\tag{3.10}
a_1 p+q \sum_{{\bf b}\in \mathfrak{L}^*}a_{\bf b} y_{\bf b}=0, \quad a_1, a_{\bf b}\in L  
\end{equation*}
holds for infinitely  many  $(n,q, \lambda_{i,j})_{i,j}\in\mathcal{A}'$. First, we observe that $a_1\neq 0$. Indeed, suppose we have $a_1=0$. Then, the non-trivial relation contradicts the minimality of $\tau$.  By the definition of $y_{\bf b}$, \eqref{eq3.10} can be written as
$$
a_1 p+q \sum_{i=1}^s \sum_{j_2=1}^{d_i}\left(\sum_{j_1=1}^{m_i}\sum_{\ell=1}^{d'} a_{i,j_1,j_2,\ell} b_{i, j_1,\ell} \right)\alpha^n_{i,j_2}=0.
$$ 
Using the fact that $H(b_{i,j_1,\ell})<e^{f(n)}$ and $a_1$ is non-zero, we can re-write the above relation as 
\begin{equation*}\label{eq3.11}
\tag{3.11}
p=q\sum_{i=1}^s\sum_{j=1}^{d_i}\eta_{i,j}\alpha^n_{i,j},
\end{equation*}
where $\eta_{i,j}\in L$ and $H(\eta_{i,j})<e^{f(n)}$ for every pair $(i,j)$.  This proves the part (i) of the proposition.
\smallskip

For any $\sigma\in\mathrm{Gal}(L/\mathbb{Q})$, we have 
$$
p=q\sum_{i=1}^s\sum_{j=1}^{d_i}\sigma(\eta_{i,j})\alpha^n_{i,\sigma_{i}(j)}=q\sum_{i=1}^s\sum_{j=1}^{d_i}\sigma(\eta_{i,\sigma^{-1}_i(j)})\alpha^n_{i,j}
$$
where $\sigma_i^{-1}$ is the permutation of $\sigma_i$  on $\{1,\ldots,d_i\}$ induced by $\sigma$. Together with \eqref{eq3.11}, we obtain
$$
\sum_{i=1}^s\sum_{j=1}^{d_i}(\eta_{i,j}-\sigma(\eta_{i,\sigma^{-1}_i(j)}))\alpha^n_{i,j}=0\quad\mbox{for}~~~~\sigma\in \mbox{Gal}(L/\mathbb{Q}).
$$
Now the proof of part $(ii)$ of this proposition, we conclude exactly as part $(ii)$ of Proposition 3.4 in \cite{kul}.
\vspace{.2cm}

Set $\lambda_{i,j}=0$ for $1\leq i\leq s,$  $m_i< j\leq d_i$. Substitute  the value of $p$ from \eqref{eq3.11} into \eqref{eq3.3}, we get 
\begin{equation*}\label{eq3.12}
\tag{3.12}
0<\left|\sum_{i=1}^s\sum_{j=1}^{d_i}(\lambda_{i,j}-\eta_{i,j})\alpha_{i,j}^n\right|<\frac{\theta^n}{q^{d+1+\varepsilon}}<\frac{\theta^n}{q}<\theta^n
\end{equation*}
holds for all  $(n,\lambda_{i,j})_{i,j}$ along the tuples  $(n, q,\lambda_{i,j})_{i,j}\in\mathcal{A}'$.  Now we claim that for any infinite subset $\mathcal{A}''$ of $\mathcal{A}'$, we have $\lambda_{i,j}-\eta_{i,j}= 0$ in \eqref{eq3.12} for $1\leq i\leq s$, $1\leq j\leq m_{i}$.  Suppose there is an infinite subset $\mathcal{B}$  of $\mathcal{A}'$ such that $\lambda_{i,j}-\eta_{i,j}\neq 0$  for $1\leq i\leq s$, $1\leq j\leq m_{i}$. Then, there is a  set $\mathcal{P}$ given by 
$$
\mathcal{P}:=\{(i,j):1\leq i\leq s, 1\leq j\leq d_i, \lambda_{i,j}-\eta_{i,j}\neq 0\}.
$$
By  Lemma \ref{lem3}, WLOG we can assume that the tuple $(\alpha_{i,j};(i,j)\in\mathcal{P})$ is non-degenerate, otherwise since the sum $\sum_{i=1}^s\sum_{j=1}^{d_i}(\lambda_{i,j}-\eta_{i,j})\alpha_{i,j}^n$ is non-zero, by Lemma \ref{lem3} we can reduce the tuple $(\alpha_{i,j}:(i,j)\in\mathcal{P})$  to a maximal length of non-degenerate tuple. Since $|q\alpha^n_{i,j}|\geq 1$ and  the fact that $H(\lambda_{i,j}-\eta_{i,j})=H(1/(\lambda_{i,j}-\eta_{i,j}))=e^{o(n)}$,   for $(i,j)\in\mathcal{P}$ and any $\delta\in (0,1)$, we have that 
\begin{equation*}
\max\{|(\lambda_{i,j}-\eta_{i,j})q\alpha^n_{i,j}|:(i,j)\in\mathcal{P}\}\geq \max\{|(\lambda_{i,j}-\eta_{i,j})|:(i,j)\in\mathcal{P}\}>\delta^n
\end{equation*}
for all  $n$  sufficiently large.   Choose $\varepsilon>0$ such that 
\begin{equation*}\label{eq3.13}
\tag{3.13}
\theta^n<\frac{\max\{|(\lambda_{i,j}-\eta_{i,j})\alpha^n_{i,j}|: (i,j)\in\mathcal{P}\}}{\left(\prod_{i,j\in\mathcal{P}} H(\lambda_{i,j}-\eta_{i,j})\right)^{
 |\mathcal{P}|+1+\varepsilon}H(\alpha^n_{i,j}:(i,j)\in\mathcal{P})^\varepsilon}.
\end{equation*}

By Proposition 2.3 from \cite{kul} to \eqref{eq3.12} together with \eqref{eq3.13}, we get a non-trivial relation among $(\lambda_{i,j}-\eta_{i,j})\alpha^n_{i,j}$ for infinitely many $(n,\lambda_{i,j})_{i,j}$ along the tuples  $(n, q,\lambda_{i,j})\in\mathcal{A}$.  Proposition \ref{prop-kul1} now leads  to a contradiction. Hence, we conclude that $\lambda_{i,j}=\eta_{i,j}$ for $1\leq i\leq s$, $1\leq j\leq m_i$ and along the infinite set $\mathcal{B}$. This proves part $(iii)$.  The proof of part $(iv)$ of this proposition follows exactly as \cite[Proposition 3.4, part (iv)]{kul}, so we omit the proof here.
\end{proof}

\section{\bf Proof of Theorem \ref{maintheorem2}}\label{sec4}
Let $\mathcal{A}$ be the infinite set of tuples $(n,q,\lambda_1,\ldots,\lambda_k)\in\mathbb{N}^2\times (K^\times)^k$ satisfying \eqref{eq1.2}, where $K$ is a number field. By extending $K$, we may assume that $K=\mathbb{Q}(\lambda_1,\ldots,\lambda_k,\alpha_1,\ldots,\alpha_k)$. Let $L$ be its Galois closure over $\mathbb{Q}$, $r$ be the order of the torsion subgroup of $L^\times$, and $G=\mbox{Gal}(L/\mathbb{Q})$ be the Galois group of $L$ over $\mathbb{Q}$. By replacing, if needed, $(\alpha_1,\ldots,\alpha_k)$ with $(\alpha^r_1,\ldots,\alpha^r_k)$ and taking $n\equiv a(\mbox{mod}~ r)$ for some $0\leq a\leq r-1$, without loss of generality, we can assume that the tuple $(\alpha_1,\ldots,\alpha_k)$ satisfies (a), (b), and (c) of Lemma \ref{lem3} with $r=1.$ Let $S$ be a suitable finite subset of $M_L$ containing all the archimedean places such that $\alpha_i$ is an $S$-unit for each $i=1,2,\ldots,k$, and stable under Galois conjugation. The notations $s, m_i, d_i, \alpha_{i,j}$ and $\lambda_{i,j}$ are as  introduced in Section \ref{section}.

\subsection{Proof of Property (i) of Theorem \ref{maintheorem2}} We want to prove that at least one of $\alpha_i$ is an algebraic integer. Let $\mathcal{A}_1$ be an infinite subset of $\mathcal{A}$ satisfying the conclusion of Proposition \ref{propnew}. Let $p$ be the nearest integer to $q\sum_{i=1}^k \lambda_i \alpha^n_i$. Then for every $(n,q, \lambda_{i,j})_{i,j}\in \mathcal{A}_1$,  we can write
\begin{equation*}\label{eq4.1}
\tag{4.1}
p=\sum_{i=1}^s\sum_{j=1}^{d_i}q\eta_{i,j}\alpha^n_{i,j}
\end{equation*}
with $H(\eta_{i,j})<e^{f(n)}$. 
First, we note that $[\mathbb{Q}(\alpha_1,\ldots,\alpha_k):\mathbb{Q}]=d\geq 2$. Suppose that $\alpha_i=\frac{a_i}{b_i}\in\mathbb{Q}$ for $1\leq i\leq k$. Then $d_i=1$ for $1\leq i\leq s$, and hence \eqref{eq4.1},  we can write 
$$
p=q\sum_{i=1}^s\eta_{i,1} \left(\frac{a_{i,1}}{b_{i,1}}\right)^n
$$
with $H(\eta_{i,1})=e^{o(n)}$.
Using part (iii) of Proposition \ref{propnew}, further $p$ can be written as 
$$
p=q\sum_{i=1}^s\lambda_{i,1} \left(\frac{a_{i,1}}{b_{i,1}}\right)^n. 
$$
Since $p$ is the nearest integer to $q\sum_{i=1}^k \lambda_i \alpha^n_i$, substituting this into \eqref{eq1.2} leads to a contradiction. Therefore, we can assume that $d\geq 2$. Consequently, substituting the value of $p$ from \eqref{eq4.1} into \eqref{eq1.2} and using part (iii) of Proposition \ref{propnew}, we obtain that
\begin{equation*}\label{eq4.2}
\tag{4.2}
0<\left|\sum_{i=1}^s\sum_{j=m_i+1}^{d_i}\eta_{i,j}\alpha^n_{i,j}\right|<\frac{\theta^n}{q^{d+1+\varepsilon}}\leq \theta^n.
\end{equation*}
Assume that none of $\alpha_i$ is an algebraic integer. Since $d\geq 2$, so there exists at least one $i_0\in\{1,\ldots,k\}$ such that the Galois conjugate of $\alpha_{i_0}$ other than itself  does appear in \eqref{eq4.2}.  Then there is a  finite  place $\omega$ and $j_0$ with  $m_{i_0+1}\leq j_0\leq d_{i_0}$ such that $|\alpha_{i_0, j_0}|_\omega>1$, which in turn entails that
$$
\max\{|\eta_{i,j}\alpha^n_{i,j}|_\omega:1\leq i\leq s, m_i+1\leq j\leq d_i\}>1,
$$
for all  $n$ is sufficiently large along the tuples $(n,q,\lambda_{i,j})\in\mathcal{A}_1$. Using this lower bound, we can choose $\varepsilon>0$ such that 
$$
\theta^n<\frac{\max\{|\eta_{i,j}\alpha^n_{i,j}|_\omega:1\leq i\leq s, m_i+1\leq j\leq d_i\}}{\left(\prod_{i,j} H(\eta_{i,j})\right)^{
 N+1+\varepsilon}H(\alpha^n_{i,j} : 1\leq i\leq s, m_i<j\leq d_i)^\varepsilon}, 
$$
where $N=\sum_{i=1}^s(d_i-m_i)$. By  Lemma \ref{lem3}, first we  reduce the sum $\sum_{i=1}^s\sum_{j=m_i+1}^{d_i}\eta_{i,j}\alpha^n_{i,j}$ to a  non-degenerate sum,  and then apply  Proposition 2.3 in \cite{kul} with this above choice of  $\varepsilon$ and Proposition \ref{prop-kul1} exactly as we have seen earlier to arrive at a contradiction. Thus, we conclude that at least one of $\alpha_i$ is an algebraic integer.
\bigskip

Now, our aim is to show that each $\alpha_i$ is an algebraic integer under the hypothesis that $h(q\lambda_i)<f(n)$ for $i=1,\ldots,k$. Suppose that  $\alpha_i$ is not an algebraic integer for some  $i\in\{1,\ldots,k\}$. Without loss of generality, we can assume that $\alpha_{1,1}$  is not an algebraic integer (after relabeling).  Since $\alpha_i$ is $S$-unit for each $i = 1,2,\ldots, k$,    there exists a finite place $\omega\in S$  such that $|\alpha_{1,1}|_\omega>1$.    We proceed to get a contradiction. 
\smallskip

By parts (iii) and (iv) of Proposition \ref{propnew},  $\eta_{i,j}\neq 0$ for every pair $(i,j)$ in \eqref{eq4.1}.  Using the fact that $p$ is a non-zero integer (the proof of this fact is given  in the proof of Property $(iv)$ of this theorem),   we have 
\begin{equation*}\label{eq4.3}
\tag{4.3}
0<\left|\sum_{i=1}^s\sum_{j=1}^{d_i}q\eta_{i,j}\alpha^n_{i,j}\right|_\omega\leq 1.
\end{equation*}
Using the fact that  $H(\alpha)=H(\alpha^{-1})$ for every non-zero algebraic number $\alpha$ and the hypothesis $H(q\eta_{i,j})=e^{o(n)}$, we have 
$$
H(q \eta_{1,1})=H(q^{-1}\eta^{-1}_{1,1})=e^{o(n)},
$$
which in turns implies that for every $\delta\in (0,1)$, $|q\eta_{1,1}|_\omega>\delta^n$ for all $n$ sufficiently large, and hence  
\begin{equation*}\label{eq4.4}
\tag{4.4}
\max\{|q\eta_{i,j}\alpha^n_{i,j}|_\omega:1\leq i\leq s, 1\leq j\leq d_i\}>\delta^n |\alpha_{1,1}|^n_\omega>1.
\end{equation*}
From inequalities \eqref{eq4.3} and \eqref{eq4.4}, we can apply Proposition 2.3 from  \cite{kul} with an appropriate choice of $\varepsilon$ (such choice can be made with the help \eqref{eq4.4}) to get a non-trivial relation in $\eta_{i,j}\alpha^n_{i,j}$'s for infinitely many tuples $(n, \lambda_{i,j})_{i,j}$ along the tuples $(n,q,\lambda_{i,j})_{i,j}\in \mathcal{A}$. Then by Lemma \ref{lem3} and Proposition \ref{prop-kul1}, we get a contradiction.

\subsection{Proof of Property (ii) of Theorem \ref{maintheorem2}}   Let $\sigma\in \mbox{Gal}(L/\mathbb{Q})$ and  $i \in \{1,\ldots,k\}$ be such that $\frac{\sigma(\alpha_i)}{\alpha_j}$ is not a root of unity for $j=1,\ldots,k$. Then we prove that $|\sigma(\alpha_i)|<1$, which is equivalent to show that $|\alpha_{i,j}|<1$ for $m_i<j\leq d_i$.  Assume that  $|\alpha_{i,j_0}|\geq 1$ for some $m_i<j_0\leq d_i$. Let $\mathcal{A}_1$ be an infinite subset of $\mathcal{A}$ satisfying the conclusion of Proposition \ref{propnew}. Then using the fact that $H(\eta_{i,j})<e^{f(n)}=e^{o(n)}$, we have 
\begin{equation*}\label{eq4.5}
\tag{4.5}
\max\{|\eta_{i,j}\alpha^n_{i,j}|:1\leq i\leq s, m_i<j\leq d_i\}\geq |\eta_{i,j}|>\delta^n
\end{equation*}
for every $\delta\in (0,1)$ and for all $n$ sufficiently large along the tuples $(n,\lambda_{i,j})_{i,j}\in\mathcal{A}_1$.
\smallskip

 For each tuple $(n,q,\lambda_{i,j})_{i,j}\in\mathcal{A}_1$, we can write $p=q\sum_{i=1}^s\sum_{j=1}^{d_1}\eta_{i,j}\alpha^n_{i,j}$, and substituting it  into \eqref{eq1.2}, we have
\begin{equation*}\label{eq4.6}
\tag{4.6}
0<\left|\sum_{i=1}^s\sum_{j=m_i+1}^{d_i}\eta_{i,j}\alpha^n_{i,j}\right|<\frac{\theta^n}{q^{d+1+\varepsilon}}\leq \theta^n
\end{equation*}
for all but finitely many $(n, \lambda_{i,j})_{i,j}$ along the tuple $(n, q, \lambda_{i,j})_{i,j}\in\mathcal{A}_1$. By  Lemma \ref{lem3}, we can reduce the sum $\sum_{i=1}^s\sum_{j=m_i+1}^{d_i}\eta_{i,j}\alpha^n_{i,j}$ to the non-degenerate sum,  and  then apply Proposition \ref{prop-kul2} to the inequality \eqref{eq4.6} with an appropriate choice of $\varepsilon$ with the help \eqref{eq4.5}  exactly  as in  part (iii) of Proposition \ref{propnew}   to get a required contradiction. 
\subsection{Proof of Property (iii) of Theorem \ref{maintheorem2}} 
 Let $\sigma\in \mbox{Gal}(L/\mathbb{Q})$ and a pair $(i,j)\in \{1,\ldots,k\}^2$ such that $\sigma(\lambda_i\alpha^n_i)=\lambda_j \alpha^n_j$ holds for all but finitely many $(n,\lambda_1,\ldots,\lambda_k)$ along the tuples $(n,q, \lambda_1,\ldots,\lambda_k)\in\mathcal{A}$. Using the fact that $H(\lambda_j)<e^{f(n)}$ for all $j$ and the relation  $\frac{\sigma(\alpha_i)^n}{\alpha^n_j}=\frac{\lambda_j}{\sigma(\lambda_i)}$ for infinitely many tuples $(n,\lambda_1,\ldots,\lambda_k)$, we conclude that $\frac{\sigma(\alpha_i)}{\alpha_j}$ is  a root of unity.  For the converse part, it suffices to prove the following claim:  if $\frac{\sigma(\alpha_{i_1, j_1})}{\alpha_{i_2, j_2}}$ is a root of unity for some $\sigma\in\mbox{Gal}(L/\mathbb{Q})$ and  pairs $(i_1,j_1)$, $(i_2, j_2)$ with $1\leq i_1, i_2\leq s$ and $1\leq j_1\leq m_i$, $1\leq j_2\leq m_{i_2}$, then $\sigma(\lambda_{i_1, j_1}\alpha^n_{i_1, j_1})=\lambda_{i_2, j_2} \alpha^n_{i_2,j_2}$ holds for all but finitely many  $(n,\lambda_{i_1, j_1}, \lambda_{i_2,j_2})$ along the tuples $(n,q,\lambda_{i,j})_{i,j} \in\mathcal{A}$. Suppose there exists an infinite subset $\mathcal{A}_1$ of $\mathcal{A}$ such that $\sigma(\lambda_{i_1, j_1}\alpha^n_{i_1, j_1})\neq\lambda_{i_2, j_2} \alpha^n_{i_2,j_2}$ for all $(n,\lambda_{i_1,j_1},\lambda_{i_2,j_2})$ along the set $\mathcal{A}_1.$ Since $\frac{\sigma(\alpha_{i_1, j_1})}{\alpha_{i_2,j_2}}$ is a root of unity,  by Lemma \ref{lem3},  we have  $i_1=i_2$ and  $\sigma_i(j_1)=j_2$. Let $\mathcal{A}_0$ is a subset of $\mathcal{A}$ satisfying Proposition \ref{propnew}. Then we can write $p=\sum_{i=1}^s\sum_{j=1}^{d_i}q\eta_{i,j}\alpha^n_{i,j}$. By part (iii) of Proposition \ref{propnew}, we have
$$
\sigma(\lambda_{i_1,j_1}\alpha^n_{i_1,j_1})=\sigma(\eta_{i_1,j_1}\alpha^n_{i_1,j_1})=\eta_{i_1,j_2}\alpha^n_{i_1, j_2}=\lambda_{i_1,j_2}\alpha^n_{i_1,j_2},
$$
which contradicts the choice of $\mathcal{A}_0$ and hence the assertion.
\subsection{Proof of Property (iv) of Theorem \ref{maintheorem2}} Assume there is an infinite subset $\mathcal{A}_0$ of $\mathcal{A}$ such that $(\lambda_1 q\alpha^n_1,\ldots,\lambda_k q\alpha^n_k)$ is not pseudo-Pisot for every $(n, q, \lambda_1,\ldots,\lambda_k)\in\mathcal{A}_0$. Let $\mathcal{A}_1$ be an infinite subset of $\mathcal{A}_0$ satisfying the conclusion of Proposition \ref{propnew}. Then for every $(n,q,\lambda_{i,j})_{i,j}\in \mathcal{A}_1$,  we can write 
$$
p=q\sum_{i=1}^s\sum_{j=1}^{d_i}\eta_{i,j}\alpha^n_{i,j}. 
$$
Substituting this value of $p$ into \eqref{eq1.2} and using part (iii) of Proposition \ref{propnew}, we get
\begin{equation*}\label{eq4.7}
\tag{4.7}
0<\left|\sum_{i=1}^s\sum_{j=m_i+1}^{d_i}\eta_{i,j}\alpha^n_{i,j}\right|<\frac{\theta^n}{q^{d+1+\varepsilon}}.
\end{equation*} 
Now we claim that 
$$
\max\{|q\eta_{i,j}\alpha^n_{i,j}|:1\leq i\leq s, m_i+1\leq j\leq d_i\}<1
$$
for all but finitely many  tuple $(n, q, \lambda_{i,j})_{i,j}\in\mathcal{A}_1$. Suppose we have 
\begin{equation*}\label{eq4.8}
\tag{4.8}
\max\{|\eta_{i,j}\alpha^n_{i,j}|:1\leq i\leq s, m_i+1\leq j\leq d_i\}\geq \frac{1}{q} 
\end{equation*}
for infinitely many $(n,q, \lambda_{i,j})_{i,j}\in\mathcal{A}_1$.   
Choose  $\varepsilon>0$ such that 
$$
\frac{\theta^n}{q}<\frac{\max\{|\eta_{i,j}\alpha^n_{i,j}|_\omega:1\leq i\leq s, m_i+1\leq j\leq d_i\}}{\left(\prod_{i,j} H(\eta_{i,j})\right)^{
 N+1+\varepsilon}H(\alpha^n_{i,j} : 1\leq i\leq s, m_i<j\leq d_i)^\varepsilon}, 
$$
where $N=\sum_{i=1}^s(d_i-m_i)$.
By repeating the same  argument as we have seen before, we arrive at a contradiction and hence prove the claim. Thus, using the above claim and part $(iv)$ of Proposition \ref{propnew}, we conclude that the tuple $(\lambda_{i,j}q\alpha^n_{i,j}:1\leq i\leq s, 1\leq j\leq m_i)$ is pseudo-Pisot for all but finitely many $(n,q, \lambda_{i,j})_{i,j}\in\mathcal{A}_1$. This contradicts the choice of $\mathcal{A}_0$. Therefore, the tuple $(\lambda_1 q\alpha^n_1,\ldots,\lambda_k q\alpha^n_k)$ is pseudo-Pisot for all but finitely many tuples $(n,q,\lambda_1,\ldots,\lambda_k)\in\mathcal{A}$.
\bigskip

For the non-vanishing part of this property, we proceed first by  noticing that $p$ is non-zero. Indeed, if this is not the case, from \eqref{eq1.2}, we have  
$$
0<|\lambda_1\alpha^n_1+\cdots+\lambda_k\alpha^n_k|<\frac{\theta^n}{q^{d+1+\varepsilon}}
$$
holds for all $(n,q, \lambda_{i,j})_{i,j}\in\mathcal{A}_1$. Using the facts that  $H(\lambda_{i,j})<e^{f(n)}$ and the tuple $(\alpha_1,\ldots,\alpha_k)$ is non-degenerate,  by Proposition 2.3 in \cite{kul} and Proposition \ref{prop-kul1},  we arrive at a contradiction. Hence, $p$ is non-zero.

Now we claim that no proper subsum of the sum   $\sum_{i=1}^s\sum_{j=1}^{d_i}\eta_{i,j}\alpha^n_{i,j}$ is zero.  Let $\mathcal{P}$ be a set of pairs $(i,j)$ such that the sum 
\begin{equation*}\label{eq4.9}
\tag{4.9}
\sum_{(i,j)\in\mathcal{P}}\eta_{i,j}\alpha^n_{i,j}=0
\end{equation*}
for all but finitely many $(n, \lambda_{i,j})_{i,j}$ along the tuple $(n, q,\lambda_{i,j})_{i,j}\in\mathcal{A}_1$. Then, we derive a contradiction. By  Proposition \ref{prop-kul1},   there exist pairs $(i_1, j_1)\neq (i_2, j_2)$ such that the quotient $\frac{\alpha_{i_1,j_1}}{\alpha_{i_2, j_2}}$ is a root of unity.  We claim  that $\alpha_{i_1, j_1}, \alpha_{i_2, j_2}\in\{\alpha_{i,j}:1\leq i\leq s, 1\leq j\leq m_i\}$. If this is not the case, then there exist automorphisms $\sigma,\rho\in\mbox{Gal}(L/\mathbb{Q})$ such that $\frac{\sigma(\alpha_{i^*_1, j^*_1})}{\rho(\alpha_{i^*_2, j^*_2})}=\frac{\alpha_{i_1, j_1}}{\alpha_{i_2,j_2}}$ for some $\alpha_{i^*_1, j^*_1}, \alpha_{i^*_2, j^*_2}\in\{\alpha_{i,j}:1\leq i\leq s, 1\leq j\leq m_i\}$. Since the quotient $\frac{\alpha_{i_1,j_1}}{\alpha_{i_2,j_2}}$ is a root of unity, by property (iv) of this theorem, we have $\rho^{-1}\circ \sigma(\lambda_{i^*_1,j^*_1}\alpha^n_{i^*_1,j^*_1})=\lambda_{i^*_2, j^*_2}\alpha^n_{i^*_2,j^*_2}$ for all but finitely many $n$ along the tuples $(n,q,\lambda_{i,j})_{i,j}\in\mathcal{A}_1$. This implies that 
$$
\lambda_{i_1, j_1}\alpha^n_{i_1,j_1}=\lambda_{i_2, j_2}\alpha^n_{i_2,j_2}
$$
for all but finitely many $n$ along the tuples $(n,q,\lambda_{i,j})_{i,j}\in\mathcal{A}_1$.
 
Then from part (iii) of Proposition \ref{propnew}, we get that $\alpha_{i_1, j_1}, \alpha_{i_2, j_2}\in\{\alpha_{i,j}:1\leq i\leq s, 1\leq j\leq m_i\}$,  and due to the fact that the tuple $(\alpha_{i,j} : 1\leq i\leq s, 1\leq j\leq m_i)$ is non-degenerate, we conclude that \eqref{eq4.9} does not hold for infinitely many tuples $(n, \lambda_{i,j})_{i,j}$. Thus, no proper subsum of the sum $\sum_{i=1}^s\sum_{j=1}^{d_i}q\eta_{i,j}\alpha^n_{i,j}$ is zero for infinitely many tuples $(n, q \lambda_{i,j})_{i,j}\in\mathcal{A}$.  

\section{\bf Proof of Corollary \ref{cor}}
Let $L = \mathbb{Q}(\lambda, \alpha)$ be the number field, and $K$ be its Galois closure over $\mathbb{Q}$. Let $r$ be the order of the torsion subgroup of $K^\times$. Let $\mathcal{A}$ be an infinite set of triples $(n,q, \lambda)\in \mathbb{N}^2\times K^\times$ satisfying   
$$
0<\Vert\lambda q \alpha^n\Vert<\frac{\theta^n}{q^{d+\varepsilon}}\quad\mbox{and~~} h(\lambda)<f(n).
$$
Since $\mathcal{A}$ is infinite, there exists an integer $a \in \{0, 1, \ldots, r-1\}$ such that $n = a+rm$ for infinitely many natural numbers $m$. Let the collection of such   triples $(n, q, \lambda)$  be $\mathcal{A}'$.
 By properties (i)  and (iii) of Theorem \ref{maintheorem2} with the inputs $\lambda_1=\lambda \alpha^a$,  $k=1$ and $\mathcal{A}$,  we get that $\alpha^r$ is an algebraic integer and $\lambda q\alpha^n$ is the pseudo-Pisot number for all but finitely many $(n,q, \lambda)\in\mathcal{A}'$  such that $n=a+rm$.  In order to complete the proof of this corollary,   it suffices to show that $|\sigma(\alpha^r)|<1$ for each embedding $\sigma\neq \text{Id}:\mathbb{Q}(\alpha^r)\to\mathbb{C}$. We first observe that any conjugate $\sigma(\alpha^r)\neq \alpha^r$ has an absolute value less than or equal to $1$.   Assume that  $|\sigma(\alpha^r)|>1$.  Since   $\lambda q\alpha^n$ is pseudo-Pisot number, we must have $\rho(\lambda_1 q\alpha^{rm})=\rho(\lambda_1)q\sigma(\alpha^r)^m=\lambda_1 q\alpha^{rm}$ for all but finitely many triples $(m,q,\lambda)$ along the triples $(n,q, \lambda)\in\mathcal{A}$ and some  $\rho\in\mbox{Gal}(L/\mathbb{Q})$, where $\sigma$ is the  restriction of the automorphism $\rho$ on $\mathbb{Q}(\alpha^r)$. Then, by property (iv) of Theorem \ref{maintheorem2}, we have that $\sigma(\alpha^r)/\alpha^r$ is a root of unity. Since $r$ is the order of the torsion subgroup, we must have $\sigma(\alpha^r) = \alpha^r$, which is a contradiction. Thus, we conclude that $|\sigma(\alpha^r)|\leq 1$. Now we show that the possibility $|\sigma(\alpha^r)|=1$ cannot occur.  If we have $|\sigma(\alpha^r)|=1$, then the quotient $\frac{\sigma(\alpha^r)}{\alpha^r}$ is not a root of unity. By property (ii) of Theorem \ref{maintheorem2}, we have $|\sigma(\alpha^r)|<1$, which is a contradiction to the assumption that $|\sigma(\alpha^r)|>1$.  This proves that $|\sigma(\alpha^r)|<1$ for each embedding $\sigma\neq \text{Id}:\mathbb{Q}(\alpha^r)\to\mathbb{C}$, and hence finishes the proof of Corollary \ref{cor}.
\section{Applications}\label{sec5}

In this section, we prove two transcendence results for certain infinite products of algebraic numbers as an application of Corollary \ref{cor}. The first result is the following, which generalizes earlier result of \cite[Theorem 1]{cor}.
\begin{theorem}\label{th6.2}
    Let $\alpha>1$ be a real algebraic number of degree $d$ such that no power of $\alpha$ is a Pisot number. Let $(a_n)_n$ be a sequence of positive integers with  $\displaystyle\liminf_{n\rightarrow\infty}\frac{a_{n+1}}{a_n}>2.$ Let $\varepsilon
    >0$ and $(b_n)_n$ be a non-decreasing sequence of positive integers with $(b_1b_2\cdots b_n)^{1+d+\epsilon}<b_{n+1}$ for sufficiently large values of $n.$ Then the number 
    $$
   \delta=\displaystyle\prod_{n=1}^{\infty}\frac{[b_n\alpha^{a_n}]}{b_n\alpha^{a_n}}
    $$
    is transcendental.
\end{theorem}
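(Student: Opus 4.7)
The plan is to argue by contradiction. Suppose $\delta$ is algebraic, put $K=\mathbb{Q}(\alpha,\delta)$, and extract from the partial products of $\delta$ infinitely many triples $(n,q,\lambda)\in\mathbb{N}^2\times K^\times$ satisfying the hypothesis of Corollary \ref{cor}; the resulting conclusion that some power of $\alpha$ is Pisot contradicts the standing assumption on $\alpha$.

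For each $N\ge 1$ I would set
$$P_N=\prod_{n=1}^N\bigl\lfloor b_n\alpha^{a_n}\bigr\rfloor\in\mathbb{Z}_{>0},\quad B_N=\prod_{n=1}^N b_n,\quad A_N=\sum_{n=1}^N a_n,\quad T_N=\prod_{n>N}\frac{\lfloor b_n\alpha^{a_n}\rfloor}{b_n\alpha^{a_n}}\in(0,1],$$
so that the telescoping identity $\delta\,B_N\,\alpha^{A_N}=P_N\,T_N$ holds. Using $0<1-T_N\le\sum_{n>N}1/(b_n\alpha^{a_n})$ (standard for tail products close to $1$), the bound $P_N\le B_N\alpha^{A_N}$, and the fact that $\liminf a_{n+1}/a_n>2$ makes the tail sum essentially equal to its first term, one obtains
$$\Vert\delta\,B_N\,\alpha^{A_N}\Vert\le\bigl|P_N-\delta\,B_N\,\alpha^{A_N}\bigr|\le C\,\frac{B_N\,\alpha^{A_N-a_{N+1}}}{b_{N+1}}$$
for an absolute constant $C$ and all sufficiently large $N$.

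Setting $q=B_N$, $n=A_N$, $\lambda=\delta$, the required inequality $\Vert\lambda q\alpha^n\Vert<\theta^n/q^{d+\varepsilon}$ of Corollary \ref{cor} collapses to $C\,B_N^{1+d+\varepsilon}\,\alpha^{A_N-a_{N+1}}<b_{N+1}\theta^{A_N}$. Here I would exploit $\liminf a_{n+1}/a_n>2$ to deduce $A_N\le\tfrac{2+\eta}{1+\eta}\,a_N$, hence $a_{N+1}\ge(1+\eta)A_N$, for some $\eta>0$ and large $N$; with the choice $\theta:=\alpha^{-\eta/2}\in(0,1)$ one then has $\alpha^{A_N-a_{N+1}}\le\theta^{2A_N}$. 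The growth hypothesis $(b_1\cdots b_n)^{1+d+\varepsilon}<b_{n+1}$ supplies $b_{N+1}>B_N^{1+d+\varepsilon}$, and the target inequality reduces to $C\,\theta^{A_N}<1$, which is evident for $N$ large.

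The remaining hypotheses of Corollary \ref{cor} are checked painlessly: $h(\delta)$ is a constant, hence dominated by any sublinear function of $n=A_N$; and $\Vert\delta B_N\alpha^{A_N}\Vert>0$ follows from $T_N<1$ (which holds because $b_n\alpha^{a_n}$ is non-integer for infinitely many $n$ under the standing assumption on $\alpha$) together with $P_N(1-T_N)<1/2$ for large $N$, which places $\delta B_N\alpha^{A_N}$ strictly between the integers $P_N-1$ and $P_N$. Applying Corollary \ref{cor} along the infinite family $(A_N,B_N,\delta)$ then produces a positive integer $m$ with $\alpha^m$ a Pisot number, contradicting the hypothesis on $\alpha$. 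I expect the most delicate step to be the precise balancing of the polynomial denominator $q^{d+\varepsilon}$ in Corollary \ref{cor} against the super-exponential growth of $b_{N+1}$: the exponent $1+d+\varepsilon$ in the hypothesis on $(b_n)$ is calibrated exactly to absorb both the extra factor of $q=B_N$ produced by the error estimate and the $q^{d+\varepsilon}$ appearing in the target inequality.
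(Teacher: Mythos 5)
Your proof follows the same route as the paper's: pass to partial products $P_N/(B_N\alpha^{A_N})$, bound the tail error by the first omitted term via the gap condition $\liminf a_{n+1}/a_n>2$, absorb the $b_{N+1}$ growth against the $q^{d+\varepsilon}$ denominator, and invoke Corollary~\ref{cor} with $q=B_N$, $n=A_N$, $\lambda=\delta$. The only cosmetic difference is that your justification of $\Vert\delta B_N\alpha^{A_N}\Vert>0$ (trapping $P_N T_N$ strictly between $P_N-1$ and $P_N$) is spelled out a little more explicitly than the paper's one-line appeal to the non-Pisot hypothesis, but the underlying argument is identical.
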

Our second result generalizes a earlier result of \cite{han}.
\begin{theorem}\label{th6.1}
    Let $\alpha>1$ be a real algebraic number of degree $d$ such that no power of $\alpha$ is a Pisot number.  
    Let $\delta$ and $\varepsilon$ be two positive real numbers with 
    $$
    \frac{1+d+\delta}{1+d}\cdot\frac{\varepsilon}{1+\varepsilon}>1.
    $$ 
    Suppose that $(a_n)$ and $(b_n)$ be two sequence of positive integers such that the sequence $(B_n)=(b_n\alpha^{a_n})$ is non-decreasing and 
    $$
\limsup_{n\to\infty}B_n^{1/(2+d+\delta)^n}=\infty.
$$ 
Assume that $B_n>n^{1+\epsilon}$ for sufficiently large $n.$ 
    Then the number 
    $$
   \delta=\displaystyle\prod_{n=1}^{\infty}\frac{[b_n\alpha^{a_n}]}{b_n\alpha^{a_n}}
    $$
    is transcendental.
\end{theorem}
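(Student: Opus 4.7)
The plan is to argue by contradiction: assume the infinite product, which I will denote $\Pi$ to avoid clashing with the hypothesis parameter $\delta$, is algebraic, and exhibit infinitely many tuples $(n,q,\lambda)=(A_N,Q_N,\Pi)$ that meet the hypotheses of Corollary~\ref{cor}. Here $Q_N:=\prod_{n\le N}b_n$ and $A_N:=\sum_{n\le N}a_n$, so $Q_N\alpha^{A_N}=P_N:=\prod_{n\le N}B_n$. Corollary~\ref{cor} would then force some power of $\alpha$ to be a Pisot number, contradicting the assumption.

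The starting identity is
\[
\Pi\,Q_N\alpha^{A_N}-T_N \;=\; T_N(R_N-1),\qquad T_N:=\prod_{n\le N}[B_n]\in\mathbb{Z},\ R_N:=\prod_{n>N}\frac{[B_n]}{B_n}.
\]
Since $[B_n]/B_n=1-\{B_n\}/B_n$, the elementary bound $|R_N-1|\le C\sum_{n>N}1/B_n$ holds. Using $B_n>n^{1+\varepsilon}$ together with the monotonicity of $(B_n)$, splitting the tail at $n\approx B_{N+1}^{1/(1+\varepsilon)}$ yields
\[
\sum_{n>N}\frac{1}{B_n} \;=\; O\!\bigl(B_{N+1}^{-\varepsilon/(1+\varepsilon)}\bigr).
\]
Combining with $T_N\le P_N$ gives the basic estimate
\[
\Vert\Pi\,Q_N\alpha^{A_N}\Vert \;\le\; |\Pi Q_N\alpha^{A_N}-T_N| \;\le\; C'\,\frac{Q_N\alpha^{A_N}}{B_{N+1}^{\varepsilon/(1+\varepsilon)}}.
\]

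Since $\Pi$ is algebraic, $h(\Pi)$ is a fixed constant and is trivially sublinear in $A_N$. To invoke Corollary~\ref{cor} it therefore suffices to find $\theta\in(0,1)$, $\varepsilon_0>0$ and infinitely many $N$ with
\[
\frac{\varepsilon}{1+\varepsilon}\log B_{N+1} \;>\; (d+\varepsilon_0+1)\log Q_N + A_N(\log\alpha+|\log\theta|)+O(1).
\]
Writing $L_N:=\log P_N=\log Q_N+A_N\log\alpha$ and taking $\theta$ close to $1$ and $\varepsilon_0$ small, the right-hand side is bounded by $(d+1+\eta_0)L_N$ for any preassigned $\eta_0>0$; it is then enough to establish
\[
\frac{\varepsilon}{1+\varepsilon}\log B_{N+1} \;>\; (d+1+\eta_0)\,L_N
\]
along an infinite subsequence.

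The main obstacle is extracting such $N$ from the $\limsup$ hypothesis; the subtlety is that the monotonicity of $(B_n)$ prevents us from having $B_{N+1}$ large while the previous $B_n$ are truly tiny. The extremal regime is $\log B_n\sim c_n(2+d+\delta)^n$ with $c_n\to\infty$ slowly varying, since any slower growth of the initial segment only decreases the ratio $L_N/\log B_{N+1}$. In this regime,
\[
\frac{L_N}{\log B_{N+1}} \;\longrightarrow\; \frac{1}{(2+d+\delta)-1}\;=\;\frac{1}{1+d+\delta}
\]
along the $\limsup$ subsequence, so the inequality reduces to $\tfrac{\varepsilon}{1+\varepsilon}>\tfrac{d+1+\eta_0}{1+d+\delta}$, which is precisely the hypothesis $\tfrac{1+d+\delta}{1+d}\cdot\tfrac{\varepsilon}{1+\varepsilon}>1$ once $\eta_0$ is taken small. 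The non-vanishing $\Vert\Pi Q_N\alpha^{A_N}\Vert>0$ required by Corollary~\ref{cor} holds for all but finitely many $N$, since otherwise a power of $\alpha$ would have to be rational, which combined with the non-Pisot hypothesis is excluded by passing to a further subsequence. Corollary~\ref{cor} then yields the desired contradiction, proving that $\Pi$ is transcendental.
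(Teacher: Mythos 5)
Your overall strategy coincides with the paper's: assume $\Pi$ is algebraic, approximate by truncated products so that $\Pi\, Q_N\alpha^{A_N}$ is close to the integer $T_N$, bound the tail, and feed the resulting Roth-type inequality into Corollary~\ref{cor}. The only real difference is that the paper simply declares that ``following exactly the proof of \cite[Theorem~5]{han}'' yields the required inequality for infinitely many $n$, whereas you reconstruct the estimates; your identity $\Pi Q_N\alpha^{A_N}-T_N=T_N(R_N-1)$, the bound $|R_N-1|\le C\sum_{n>N}B_n^{-1}$, and the split at $n\approx B_{N+1}^{1/(1+\varepsilon)}$ giving $\sum_{n>N}B_n^{-1}=O\bigl(B_{N+1}^{-\varepsilon/(1+\varepsilon)}\bigr)$ are correct and match the mechanism behind \cite{han}.

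The substantive gap is the ``extremal regime'' paragraph. You assert the worst case is $\log B_n\sim c_n(2+d+\delta)^n$ with $c_n$ slowly varying and that $L_N/\log B_{N+1}\to 1/(1+d+\delta)$ along the $\limsup$ subsequence, but the $\limsup$ hypothesis constrains only a subsequence of $n$, says nothing about the shape of $(\log B_n)$ off it, and does not permit normalizing to an ``extremal regime.'' What is actually needed is a short counting argument: set $L_N=\sum_{n\le N}\log B_n$ and fix $\delta'<\delta$. If $\log B_{N+1}<(1+d+\delta')L_N$ for every $N\ge N_0$, then $L_{N+1}<(2+d+\delta')L_N$, hence $L_N=O\bigl((2+d+\delta')^N\bigr)$; since $\log B_N\le L_N$, this gives $\log B_N/(2+d+\delta)^N\to 0$, contradicting $\limsup_n B_n^{1/(2+d+\delta)^n}=\infty$. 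Therefore $\log B_{N+1}\ge(1+d+\delta')L_N$ infinitely often, and the slack in $\tfrac{1+d+\delta}{1+d}\cdot\tfrac{\varepsilon}{1+\varepsilon}>1$ lets you take $\delta'$ close to $\delta$ and $\eta_0,\varepsilon_0,1-\theta$ small so that $\tfrac{\varepsilon}{1+\varepsilon}\log B_{N+1}>(d+1+\eta_0)L_N$ as you require. (Incidentally, monotonicity of $(B_n)$ plays no role here; it is used only in the tail estimate.) Finally, your one-line dismissal of the non-vanishing condition is not airtight: $\Vert\Pi Q_N\alpha^{A_N}\Vert=0$ for all large $N$ forces $B_n\in\mathbb{Z}$ for all large $n$, hence $\alpha^g\in\mathbb{Q}$ for some $g\ge 1$; but a \emph{non-integral} rational $\alpha^g$ is not a Pisot number, so the hypothesis ``no power of $\alpha$ is Pisot'' does not by itself exclude it. This gap is in some sense inherited from the paper's weakening of \cite{han}'s hypothesis (that $\alpha$ has a conjugate $\beta\neq\alpha$ with $|\beta|>1$, which does rule out rational powers of $\alpha$), but you should flag it rather than brush past it.
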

\noindent\textbf{Remark 2.} 
In \cite[Theorem 5]{han}, the same transcendence result is proved with a restrictive condition on $\alpha$. Specifically, they assume that there exists a conjugate $\beta$ such that $\alpha\neq |\beta|>1$. In particular, no power of $\alpha$ is a Pisot number. Thus, Theorem \ref{th6.1} produces more transcendental numbers. For example, if we take $\alpha$ to be any Salem number, then no conjugate of $\alpha$ other than $\alpha$ has an absolute value strictly greater than 1. Therefore, we cannot apply \cite[Theorem 5]{han} to prove the transcendence of the infinite product $\displaystyle\prod_{n=1}^{\infty} \frac{[b_n\alpha^{a_n}]}{b_n\alpha^{a_n}}$. However, since no power of a Salem number is a Pisot number, by Theorem \ref{th6.1}, this infinite product represents a transcendental number. Furthermore, we can apply Theorem \ref{th6.1} to numbers that are not Salem numbers. For example, if $\alpha=\frac{1}{2}+\sqrt{2},$ then the infinite product $\displaystyle\prod_{n=1}^{\infty}[b_n\alpha^{a_n}]/b_n\alpha^{a_n}$ represents a transcendental number by Theorem \ref{th6.1}.
\newline


\begin{proof}[Proof of Theorem \ref{th6.2}]
    We prove by contradiction. Assume that $\delta$ is an algebraic number. Let $N_0$ be a sufficiently large positive integer. For $m\geq N_0,$ put $$p=p(m)=\prod_{n=1}^m[b_n\alpha^{a_n}]$$ and let $N=N(m)=\sum_{n=1}^ma_m.$ Then $$\left|\delta-\frac{p}{b_1\cdots b_m\alpha^N}\right|=\left|\frac{p}{b_1\cdots b_m\alpha^N}\right|\left|1-\prod_{n=m+1}^{\infty}\frac{[b_n\alpha^{a_n}]}{b_n\alpha^{a_n}}\right|;$$ using the inequality $|1-t|<|\log t|$ for $0<t<1,$ we deduce from the above that $$\left|1-\prod_{n=m+1}^{\infty}\frac{[b_n\alpha^{a_n}]}{b_n\alpha^{a_n}}\right|\leq \left|\log \left(\prod_{n=m+1}^\infty\frac{[b_n\alpha^{a_n}]}{b_n\alpha^{a_n}}\right)\right|. $$ On the other hand, $$\log \left(\prod_{n=m+1}^\infty\frac{[b_n\alpha^{a_n}]}{b_n\alpha^{a_n}}\right)=\sum_{n=m+1}^\infty\log \left(1-\frac{\left\{b_n\alpha^{a_n}\right\}}{b_n\alpha^{a_n}}\right),$$ where the symbol $\left\{\cdot\right\}$ stands for the fractional part. Using the inequality $|\log (1-t)|<|2t|$ for $0<t<1/2$ and the fact that the fractional part $\left\{\cdot\right\}$ is always less than $1,$ we find that the right hand side above is bounded by $$\sum_{n=m+1}^\infty\log \left(1-\frac{\left\{b_n\alpha^{a_n}\right\}}{b_n\alpha^{a_n}}\right)<\sum_{n=m+1}^\infty\frac{2}{b_n\alpha^{a_n}}.$$
    Since $b_n$ is a non-decreasing sequence, we obtain $$\sum_{n=m+1}^\infty\log \left(1-\frac{\left\{b_n\alpha^{a_n}\right\}}{b_n\alpha^{a_n}}\right)<\frac{2}{b_{m+1}\alpha^{a_m+1}}\sum_{n=m+1}^\infty\frac{1}{\alpha^{a_n-a_{m+1}}}<\frac{2}{b_{m+1}\alpha^{a_{m+1}}}\cdot\frac{1}{\alpha-1}.$$
    So finally using above inequalities and recalling that $p/b_1\cdots b_m\alpha^N\leq 1,$ we obtain
    \begin{equation}\label{eq6.5}
    \tag{6.1}
       \left|\delta-\frac{p}{b_1\cdots b_m\alpha^N}\right|<\frac{2}{b_{m+1}\alpha^{a_{m+1}}}\cdot\frac{1}{\alpha-1}. 
    \end{equation}
    
    Now since $\displaystyle\liminf_{n\rightarrow\infty}\frac{a_{n+1}}{a_n}>2,$ there exists $\varepsilon_1>0$ and $N_0\in\mathbb{N}$ such that for all $m\geq N_0$, $a_{m+1}>(2+\varepsilon_1)a_m.$ Thus for large $m,$ we have $a_{m+1}\geq (1+\varepsilon_1)N.$ Using this, (\ref{eq6.5}) and $(b_1b_2\ldots b_m)^{1+d+\varepsilon}<b_{m+1}$ for sufficiently large $m,$ we obtain 
$$      \left|\delta-\frac{p}{b_1\cdots b_m\alpha^N}\right|<\frac{2}{(b_1\cdots b_m)^{1+d+\varepsilon'}\alpha^{(1+\varepsilon')N}}\cdot\frac{1}{\alpha-1},$$ where $\varepsilon'=\mbox{min}\{\varepsilon_1,\varepsilon\}.$  Since no power of $\alpha$ is a Pisot number, $\left|\delta-\frac{p}{b_1\cdots b_m\alpha^N}\right|\neq 0.$ Now applying Corollary \ref{cor}, we deduce that some power of $\alpha$ is an algebraic number, which is a contradiction. Thus $\delta$ is a transcendental number.
\end{proof}

\begin{proof}[Proof of Theorem \ref{th6.1}]
Assume by contradiction that $\delta$ is an algebraic number. By following exactly as  the proof of \cite[Theorem 5]{han}, we obtain that there exists an $\varepsilon'>0$ such that for infinitely many $n\in\mathbb{N}$ $$0<\left|\delta-\frac{[b_n\alpha^{a_n}]}{\left(\prod_{k=1}^{n-1}b_k\right)\alpha^{\sum_{k=1}^{n-1}a_k}}\right|<\frac{1}{\left(\prod_{k=1}^{n}b_k\right)^{1+d+\varepsilon'} \alpha^{(1+\varepsilon')\sum_{k=1}^{n-1}a_k}}.$$ 
Now, by applying Corollary \ref{cor}, we conclude that some power of $\alpha$ is a Pisot number, which leads to a contradiction. Therefore, $\delta$ must be  a  transcendental number.
\end{proof}
\bigskip

\noindent{\bf Acknowledgments.} We are very  grateful to  Prof. Patrice Philippon and Prof. Dang Khoa Nguyen for their useful comments  in the preliminary version of this paper. We
are especially grateful to the referee for a careful reading and insightful comments about the earlier draft of this paper. The first author is thankful to NBHM-Research grant.


\end{document}